\documentclass[11pt,a4paper]{amsart}
\usepackage[margin=1.03in]{geometry}
\usepackage{amsmath,amssymb,amsthm,amscd,mathtools,microtype,booktabs,array,enumitem}
\usepackage[svgnames]{xcolor}
\usepackage[colorlinks,linkcolor=FireBrick,citecolor=DarkGreen,urlcolor=MidnightBlue]{hyperref}
\hypersetup{
  pdftitle={Cusp restrictions and Bunke--Naumann invariants with level structure},
  pdfauthor={Yuqi Li and Hao-Yu Sun},
  pdfsubject={Topological modular forms with level structure and relative secondary invariants},
  pdfkeywords={TMF, Bunke--Naumann invariant, Gamma0(3), cusps, K-theory, modular forms}
}

\newcommand{\Z}{\mathbb Z}
\newcommand{\Q}{\mathbb Q}
\newcommand{\F}{\mathbb F}

\newcommand{\TMF}{\mathrm{TMF}}
\newcommand{\tmf}{\mathrm{tmf}}
\newcommand{\KO}{\mathrm{KO}}
\newcommand{\KU}{\mathrm{KU}}

\newcommand{\im}{\operatorname{im}}
\newcommand{\Exp}{\operatorname{Exp}}
\newcommand{\Tor}{\operatorname{Tor}}
\newcommand{\tr}{\operatorname{tr}}
\newcommand{\cK}{\mathcal K}
\newcommand{\cV}{\mathcal V}
\newcommand{\cO}{\mathcal O}

\newcommand{\Tate}{\operatorname{Tate}}

\theoremstyle{plain}
\newtheorem{theorem}{Theorem}[section]
\newtheorem{proposition}[theorem]{Proposition}
\newtheorem{lemma}[theorem]{Lemma}
\newtheorem{corollary}[theorem]{Corollary}
\theoremstyle{definition}
\newtheorem{definition}[theorem]{Definition}
\theoremstyle{remark}
\newtheorem{remark}[theorem]{Remark}

\title[Cusp restrictions and secondary invariants]{Cusp restrictions and Bunke--Naumann invariants\\with level structure}
\author{Yuqi Li}
\address{C. N. Yang Institute for Theoretical Physics, Stony Brook University, Stony Brook, New York 11794, USA}
\email{yuqi.li@stonybrook.edu}

\author{Hao-Yu Sun}
\address{Weinberg Institute, The University of Texas at Austin, 2515 Speedway, C1600, Austin, Texas 78712, USA}
\email{hkdavidsun@utexas.edu}
\date{10 September 2026}
\subjclass[2020]{55N34, 55Q51, 11F33, 19L41}
\keywords{topological modular forms, level structure, Bunke--Naumann invariant, cusp expansion, real K-theory}

\begin{document}

\begin{abstract}
Restriction to the full cusp divisor of a modular curve defines a
secondary invariant whose rational indeterminacy comes from a single
global modular form. For every integral weakly holomorphic level-one
modular form $h$ of weight divisible by four, we prove that the
imported value $[h/2]$ vanishes at every nontrivial $\Gamma_0(N)$
level. The construction localizes coefficients before completion
and rationalizes only after passing to homotopy groups. At odd prime
level, we identify the full cusp spectrum as a product of two real
Tate factors and construct a single holomorphic weight-two correction.
This correction yields an actual integral global homotopy class and
a rational global source class satisfying the equality required for
joint annihilation. The equality transports to every odd composite
level, while even levels follow by inverting two. At level three,
the Mahowald--Rezk homotopy calculation leaves only the periodic
$\nu$ family in stems $8K+3$. A cusp-residue homomorphism detects its secondary value, which has
exact order two throughout the periodic family. As an application,
the Bunke--Naumann secondary invariants of products in bidegrees
$(8k+1,8k'+2)$ vanish after passage to every nontrivial level.
\end{abstract}

\maketitle

\section{Introduction}

Restriction of topological modular forms to the Tate curve gives rise
to the Bunke--Naumann invariant, which records the ambiguity in a
choice of nullhomotopy through a modular-form indeterminacy
\cite[Sections~3.7 and~4.3]{BN}. Introducing level structure replaces
the single level-one cusp by the full cusp divisor of a modular curve.
The problem is then to correct the secondary value simultaneously at
all cusps using one global modular form.

Write $v_c$ for a representative at cusp $c$. Whereas separate cusp
quotients allow a different correcting form $f_c$ at each cusp, the
joint quotient requires the expansions of one rational global source
class, with an integral remainder in the homotopy of global cusp
sections. Integrality at every split complex chart does not by itself
establish this condition. In degree four, for example,
complexification sends the real generator $\alpha$ to twice the
complex Bott generator, so $\alpha/2$ has an integral complex image
without being an integral real class.

Our proof retains this integral lattice in real K-theory. For an odd
prime $p$, the full punctured cusp spectrum is
\[
 R_{\Z[1/p]}(q)\times R_{\Z[1/p]}(t),
 \qquad q\longmapsto(q,t^p),
\]
where $R_S$ denotes the completed real K-theory spectrum of a
Tate-curve chart over $S$. A single holomorphic weight-two form $F_p$
has expansions congruent to $1$ modulo $2$ in both factors.
Multiplication by the given level-one form $h$ supplies a rational
global source class, while the two series
$(1-\Exp_c(F_p))h_c/2$ define an actual integral global class.
The equality involving these classes proves joint annihilation.
At composite level, both witnesses are transported through the
level-forgetting map, without requiring an additional
sheaf-to-homotopy lifting assertion.

The order of completion is part of the construction. At level $N$,
we use the coefficient ring $S_N=\Z[1/N]$ and the rational Laurent
numerator $S_N((q))\otimes\Q$, whose coefficients have a common
denominator away from $N$. This numerator is generally smaller than
$\Q((q))$. Although the inherited target $\KO((q))[1/N]$ maps to
the completed target used here, we do not identify the two targets
as a whole. Section~\ref{sec:cusps} specifies the maps, lattices,
and normalization needed for the subsequent construction of the
witnesses.

The relative-invariant construction and its two indeterminacies follow
Bunke--Naumann \cite[Section~4.3]{BN}, whose coefficient convention
already distinguishes rationalized integral series from arbitrary
rational series \cite[Section~4.1]{BN}. Hill--Lawson construct the
level spectra and cusp restrictions \cite[Sections~5--6]{HL}, while
Mazur's congruence and normalized involution supply the prime-level
arithmetic \cite[II, Sections~5--6]{Mazur}. Within this framework,
the point requiring a separate argument is the simultaneous equality
in actual real global homotopy, with coefficient localization preceding
completion. We establish that equality by constructing both the
integral global class and the rational global source class.

The following theorem combines joint annihilation with the published
level-three torsion calculation, a residue detection argument, and
the product application. Write $\delta\in\pi_{48}\TMF_0(3)$ for
the Mahowald--Rezk periodicity unit whose modular form is
$\Delta^2$ \cite[Proposition~4.1]{MR}, and write $\nu$ for the image
of the stable degree-three Hopf class. The distinction between the
spectral unit $\delta$ and the modular form $\Delta^2$ will also
keep track of the negative periods.

\begin{theorem}[Main theorem]\label{thm:main}
Let
\[
 \Phi_N^\partial:\TMF_0(N)\longrightarrow\cK_N^\partial
\]
be the full-cusp map of Proposition~\ref{prop:full-cusp-map}, and let $b_N^\partial$ be its relative
secondary invariant.
\begin{enumerate}[label=\textup{(\roman*)},leftmargin=2.8em]
\item For every $N>1$, every $K\in\Z$, and every
$h\in M_{4K}^{!}(\mathrm{SL}_2(\Z);\Z)$, the image of $[h/2]$ in the joint all-cusp value group
is zero.
\item For every $m\in\Z$,
\[
 \Tor\pi_{48m+3}\TMF_0(3)=\Z/2\{\nu\delta^m\},
\]
and the torsion in stems $48m+11,48m+19,48m+27,48m+35,48m+43$ is zero.
\item The class $\nu\delta^m$ lies in $\ker(\Phi_3^\partial)_*$.
\[
 b_3^\partial(\nu\delta^m)
 =\rho_{3,6m}^\partial\!\left(\left[\frac{E_2\Delta^{2m}}{24}\right]\right)
\]
is nonzero of exact order two.
\item In every stem $8K+3$, the kernel of $b_3^\partial$ on its full torsion domain is
zero, and its image is $\Z/2$ precisely when $K\equiv0\pmod6$.
\item For all integers $k,k'$ and classes
$a\in\pi_{8k+1}\TMF$, $a'\in\pi_{8k'+2}\TMF$,
\[
 b_N^\partial\bigl(\iota_N(aa')\bigr)=0\qquad(N>1).
\]
\end{enumerate}
\end{theorem}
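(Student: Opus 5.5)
The proof splits along the five items. Items (i) and (v) form the ``joint annihilation'' half, and items (ii)--(iv) form the level-three computation. The plan is to prove (i) first at an odd prime level $p$, then transport it to composite and even levels. Item (ii) is taken essentially from Mahowald--Rezk. Items (iii) and (iv) are deduced from a cusp-residue homomorphism, and (v) then follows formally from (i).

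For (i) at an odd prime $p$, I use the identification of the full punctured cusp spectrum as $R_{\Z[1/p]}(q)\times R_{\Z[1/p]}(t)$ with $q\mapsto(q,t^p)$. By Mazur's congruence and the normalized Atkin--Lehner involution, I construct a holomorphic weight-two form $F_p$ on $\Gamma_0(p)$, a suitable normalization of the Eisenstein form $E_2(q)-pE_2(q^p)$, whose expansions at both cusps are $\equiv1\pmod 2$. The value $[h/2]$ is represented at the two cusps by $(h_\infty/2,h_0/2)$, and I split it as
\[
\frac{h_c}{2}=\frac{\Exp_c(F_p)h_c}{2}+\frac{(1-\Exp_c(F_p))h_c}{2}.
\]
The first summand is the cusp expansion of the rational global source class $F_p h/2$. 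This needs weight $4K+2$ to match the real-K-theory grading, which is why $4\mid$ weight is required. The second summand has coefficients divisible by $2$ in the real lattice, so it is an actual integral class in $\pi_*$ of global cusp sections. Since both factors are real Tate charts, this integrality has to be checked in $\KO$ rather than $\KU$, using the degree-four subtlety that $\alpha\mapsto 2\beta^2$. Together the two summands exhibit the required equality, so the image of $[h/2]$ in the joint quotient vanishes. For composite $N$, I choose an odd prime $p\mid N$ and pull both witnesses back along the level-forgetting map $\TMF_0(p)\to\TMF_0(N)$ and the induced map on cusp spectra; compatibility of $\Phi^\partial$ with this map makes the equality hold at level $N$. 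When $N$ is a power of $2$, $2$ is inverted in $S_N$, so $[h/2]$ is already integral and there is nothing to prove.

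For (ii), I read off the torsion in stems $8K+3$ from the Mahowald--Rezk computation of $\pi_*\TMF_0(3)$, which is $\delta$-periodic of period $48$. Only the family $\nu\delta^m$ survives, and it has order two. For (iii), $\nu\delta^m$ maps to zero in the cusp spectrum because $\pi_3\KO=0$, so it lies in $\ker(\Phi_3^\partial)_*$. I then compute $b_3^\partial$ by the Bunke--Naumann recipe, in which $\nu$ has $e$-invariant $E_2/24$, multiplied by $\Delta^{2m}$. To show this is nonzero, I define a cusp-residue homomorphism on the joint value group by taking the constant term of the $q$-expansion reduced modulo $\Z$ and modulo expansions of global forms. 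The residue of $E_2\Delta^{2m}/24$ is $1/24$ times the constant coefficient; it should be $1/2$ modulo the indeterminacy, after using the congruences of weight-$24m+2$ forms on $\Gamma_0(3)$. Exact order two follows because $2\nu=0$. Item (iv) follows by combining (ii) and (iii): the only nonzero torsion in these stems lies in $K\equiv0\pmod6$, and there $b_3^\partial$ is injective.

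For (v), the relation $aa'\in\pi_{8(k+k')+3}$ reduces, by the Bunke--Naumann product formula, to $b(aa')=[h/2]$, where $h$ is an integral form of weight divisible by $4$ built from the images of $a'$ in $\KO_{8k'+2}$ and $a$. Item (i) then kills it at every level $N>1$. The main obstacle I expect is the construction of $F_p$ with expansions $\equiv1$ mod $2$ at both cusps simultaneously, together with the verification that $(1-\Exp_c(F_p))h_c/2$ is integral in real rather than complex K-theory. If that fails, I would use Mazur's Eisenstein ideal congruence at $p$ to adjust $F_p$ by a cusp form.
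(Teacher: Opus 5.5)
Your treatment of (i) and (ii) follows the paper. It uses the same splitting of $h_c/2$ in the real product $R_{\Z[1/p]}(q)\times R_{\Z[1/p]}(t)$ and a Mazur-corrected multiple of $pE_2(p\tau)-E_2(\tau)$. It also uses transport along $C_N\mapsto C_N[p]$, and inversion of $2$ otherwise.

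The genuine gap is the detector in (iii), on which (iv) also rests.

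\begin{itemize}
\item A constant term at a single cusp, reduced modulo expansions of global forms, is identically zero. The rational source indeterminacy contains $\tfrac12\bigl(\Exp_\infty(cF_3),\Exp_0(cF_3)\bigr)$ for every $c\in\Q$, where $F_3=1+12\delta_3(q)$ is holomorphic of weight two. So its constant terms at $\infty$ already exhaust $\Q$.
\item Without that quotient the map is not well defined. It would send $b_3^\partial(\nu)$, which is killed by $2$, to $1/24$, which has order $8$ in $\Q/\Z[1/3]$. Note also that the integral lattice of constant terms is $\Z[1/3]$, not $\Z$.
\item For $m>0$ the constant terms of $E_2\Delta^{2m}$ vanish at both cusps, so evaluating directly in weight $24m+2$ detects nothing.
\end{itemize}

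What is missing is the following argument.

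\begin{itemize}
\item Use the level-three unit $\delta^m$; level-one $\TMF$ has no $\Delta^2$ unit to import from. It has even degree and cusp image $\bigl(\Delta(q)^{2m}\beta^{6m},\Delta(t^3)^{2m}\beta^{6m}\bigr)$. This gives $b_3^\partial(\nu\delta^m)=(\Phi_3^\partial)_*(\delta^m)\,b_3^\partial(\nu)$ and an isomorphism of value groups, reducing everything to degree $4$.
\item In degree $4$, apply the residue theorem on $X_0(3)$ to $g\,d\tau$ for every $g\in M_2^!(\Gamma_0(3);\Q)$. Using the width-three chart $\tau=-1/(3z)$ at the cusp $0$, this gives $\operatorname{CT}\Exp_\infty(g)+3\operatorname{CT}\Exp_0(g)=0$.
\item Hence $[(u,v)]\mapsto[\operatorname{CT}(u)+3\operatorname{CT}(v)]\in\Q/\Z[1/3]$ is a well-defined homomorphism on $\cV^\partial_{3,0}$. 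On $b_3^\partial(\nu)=\pm[(1/24,1/24)]$ it takes the value $\pm[1/6]\neq0$.
\end{itemize}

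The value ``$1/2$'' you anticipate arises only through this two-cusp combination, since $1/24+3/24=1/6\equiv1/2$ modulo $\Z[1/3]$.

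Two smaller inputs are also missing.

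\begin{itemize}
\item In (i), the Atkin--Lehner sign of $pE_2(p\tau)-E_2(\tau)$ does not by itself control the correcting cusp form at the cusp $0$. The paper instead uses that every holomorphic $f\in M_2(\Gamma_0(p);\Q)$ satisfies $\Exp_0(f)=-\tfrac1p U_pf$. This holds because the trace of $f$ to level one is a holomorphic weight-two level-one form, hence zero. Integrality of Mazur's remainder then gives the congruence at $0$; for $p=3$ no correction is needed.
\item In (v), the product formula only gives $b_1(aa')=[\widetilde f\widetilde f'/2]$ for arbitrary integral Laurent lifts of the mod-two Tate images. The factor $\tfrac12$ comes from $\widehat\eta\,\eta^2=\alpha/2$ in $\pi_4\operatorname{cofib}(\KO\to\KO_\Q)$. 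To apply (i) you need the Tachikawa--Yamashita--Yonekura primary-image theorem. It lifts $f$ and $f'$ to integral weakly holomorphic level-one forms $H,H'$ of weights $4k,4k'$; in the degree-$(8k+1)$ case, apply it to $\eta a$. Then $h=HH'$ is modular.
\end{itemize}
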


Part~\textup{(i)} is proved in Sections~\ref{sec:cusps}
and~\ref{sec:joint}. Part~\textup{(ii)} and the primary kernel in
part~\textup{(iii)} follow by degree bookkeeping from the
Mahowald--Rezk calculation. The residue relation in
Section~\ref{sec:residue} annihilates every rational source
indeterminacy but detects the value of $\nu$; the actual periodic
unit then proves parts~\textup{(iii)--(iv)} in all degrees.
Section~\ref{sec:products} proves the product formula in the bounded
target, following Tachikawa's argument, and combines the published
primary-image theorem with part~\textup{(i)} to obtain part~\textup{(v)}.

\section{Relative secondary classes}

A class killed by a map $E\to F$ admits a lift to its cofiber. When
the class is torsion, the rationalization of this lift comes from a
class of $F$. The secondary invariant records the resulting rational
representative modulo two choices: an integral class of $F$ and a
rational class coming from $E$. We use this construction at each level.

\begin{proposition}[Relative cofiber class]\label{prop:relative}
Let $\varphi:E\to F$ be a map of spectra, and let $x\in\pi_nE$ be torsion with
$\varphi_*(x)=0$. There is a natural additive class
\begin{equation}\label{eq:relative}
 b_\varphi(x)\in
 \frac{\pi_{n+1}F\otimes\Q}
 {\im(\pi_{n+1}F)+\varphi_*(\pi_{n+1}E\otimes\Q)}.
\end{equation}
\end{proposition}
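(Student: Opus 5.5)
We construct $b_\varphi(x)$ from the cofiber sequence $E\xrightarrow{\varphi}F\xrightarrow{i}C\xrightarrow{\partial}\Sigma E$, where $C=\mathrm{cofib}(\varphi)$. Since $\varphi_*(x)=0$, exactness of
\[
\pi_{n+1}C\xrightarrow{\partial}\pi_nE\xrightarrow{\varphi_*}\pi_nF
\]
gives a lift $y\in\pi_{n+1}C$ with $\partial y=x$. Let $m$ be the order of $x$, so $mx=0$. Then $\partial(my)=0$, and exactness at $\pi_{n+1}C$ gives $z\in\pi_{n+1}F$ with $i_*z=my$. We set
\[
b_\varphi(x)=\Bigl[\tfrac{1}{m}\,z\otimes 1\Bigr]\in\frac{\pi_{n+1}F\otimes\Q}{\im(\pi_{n+1}F)+\varphi_*(\pi_{n+1}E\otimes\Q)}.
\]
An equivalent description works after rationalizing: $y\otimes1$ maps to $x\otimes1=0$ under $\partial$. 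It therefore lies in the image of $i_*:\pi_{n+1}F\otimes\Q\to\pi_{n+1}C\otimes\Q$, which is exact because $\Q$ is flat.

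\textbf{Independence of choices.} There are three choices to control.

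\begin{enumerate}[label=\textup{(\alph*)},leftmargin=2.4em]
\item \emph{The element $z$.} Two choices differ by an element of $\ker i_*=\varphi_*(\pi_{n+1}E)$. Dividing by $m$ changes the class by an element of $\varphi_*(\pi_{n+1}E\otimes\Q)$, which is quotiented out.
\item \emph{The integer $m$.} Replacing $m$ by $km$ we may take $kz$, giving the same rational element $\tfrac{1}{km}kz=\tfrac1m z$. So we may fix $m$ as any multiple of the order.
\item \emph{The lift $y$.} Two lifts differ by $w\in\ker\partial=i_*(\pi_{n+1}F)$, say $w=i_*u$. Then $z$ changes by $mu$ modulo $\ker i_*$, and $\tfrac1m(mu)=u$ lies in $\im(\pi_{n+1}F)$.
\end{enumerate}

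This is exactly why the quotient in \eqref{eq:relative} uses both indeterminacies. The step needing the most care is (c) combined with (a): the indeterminacy in $z$ is $mu+\varphi_*(e)$, and both terms must be absorbed simultaneously. This works because after division by $m$ the first term is integral and the second is rationally in $\varphi_*(\pi_{n+1}E)\otimes\Q$.

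\textbf{Additivity.} Given $x,x'$ in the kernel, take a common multiple $m$ of their orders and lifts $y,y'$. Then $y+y'$ lifts $x+x'$, and $z+z'$ satisfies $i_*(z+z')=m(y+y')$, so $b_\varphi(x+x')=b_\varphi(x)+b_\varphi(x')$ by (b). The torsion classes in $\ker\varphi_*$ form a subgroup, so the domain is a group and $b_\varphi$ is a homomorphism on it.

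\textbf{Naturality.} Take a commutative square $(E,F)\to(E',F')$ via maps $\alpha,\beta$ with $\varphi'\alpha=\beta\varphi$. Choose an induced map of cofibers compatible with $\partial$ and $i$. It carries a lift $y$ to a lift of $\alpha_*x$, and carries $z$ to a valid choice $\beta_*z$. Moreover $\beta_*$ maps $\im(\pi_{n+1}F)$ and $\varphi_*(\pi_{n+1}E\otimes\Q)$ into the corresponding subgroups for $\varphi'$. Hence $b_{\varphi'}(\alpha_*x)=\beta_*b_\varphi(x)$.

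The map of cofibers is not unique, but any choice works: step (c) shows the result is independent of the lift chosen in $C'$.
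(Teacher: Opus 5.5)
Your proposal is correct and takes essentially the same approach as the paper: lift $x$ to the cofiber, use rational exactness to get a rational class of $F$, and identify the two indeterminacies with $\varphi_*(\pi_{n+1}E\otimes\Q)$ and $\im(\pi_{n+1}F)$. Your $z/m$ is an explicit integral form of the paper's rational lift, since $i_*(z)/m=y\otimes 1$. Your checks (a)--(c) and your naturality argument, which the paper proves separately as Proposition~\ref{prop:naturality}, supply details that the paper's proof only sketches.
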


\begin{proof}
Let $C_\varphi$ be the cofiber, and choose
$\widetilde x\in\pi_{n+1}C_\varphi$ with boundary $x$.
Since $x$ is torsion, rational exactness supplies
$z\in\pi_{n+1}F\otimes\Q$ mapping to $\widetilde x_\Q$.
Changing $z$ adds an element of
$\varphi_*(\pi_{n+1}E\otimes\Q)$, whereas changing the integral lift
adds an element of the integral image of $\pi_{n+1}F$.
Compatible choices under addition establish additivity.
\end{proof}

\begin{proposition}[Naturality]\label{prop:naturality}
A homotopy-commutative square
\[
\begin{CD}
 E @>{\varphi}>> F\\
 @V{u}VV @VV{v}V\\
 E' @>{\varphi'}>> F'
\end{CD}
\]
induces a map of the quotients in~\eqref{eq:relative}, and
\[
 b_{\varphi'}(u_*x)=v_*b_\varphi(x).
\]
\end{proposition}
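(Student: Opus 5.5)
The plan is to realize the square on cofibers and then chase the construction of Proposition~\ref{prop:relative} through the induced map. Fix a homotopy $H$ witnessing $v\varphi\simeq\varphi' u$. The cofiber sequences $E\to F\to C_\varphi\to\Sigma E$ and $E'\to F'\to C_{\varphi'}\to\Sigma E'$, together with $H$, determine a map $w:C_\varphi\to C_{\varphi'}$. This gives a map of cofiber sequences: the square with $v$ commutes, and the boundary square satisfies $\partial' w\simeq(\Sigma u)\partial$. This is the standard fill-in in a triangulated category, made canonical once $H$ is chosen by taking the induced map on mapping cones. Consequently $w_*$ commutes with the long exact sequences in homotopy, both integrally and after $\otimes\Q$.

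Next I check that $v_*$ descends to the quotients in~\eqref{eq:relative}. The map $v_*:\pi_{n+1}F\otimes\Q\to\pi_{n+1}F'\otimes\Q$ sends the integral image $\im(\pi_{n+1}F)$ into $\im(\pi_{n+1}F')$, since $v_*$ is defined integrally. It sends $\varphi_*(\pi_{n+1}E\otimes\Q)$ to $v_*\varphi_*(\cdots)=\varphi'_*u_*(\cdots)$, which lies in $\varphi'_*(\pi_{n+1}E'\otimes\Q)$. The identity $v_*\varphi_*=\varphi'_*u_*$ holds on homotopy groups because the square homotopy commutes. So $v_*$ induces a well-defined map of quotients. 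Also $u_*x$ is torsion and satisfies $\varphi'_*u_*x=v_*\varphi_*x=0$, so $b_{\varphi'}(u_*x)$ is defined.

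For the main identity, choose $\widetilde x\in\pi_{n+1}C_\varphi$ with $\partial\widetilde x=x$ and $z\in\pi_{n+1}F\otimes\Q$ mapping to $\widetilde x_\Q$. Then $w_*\widetilde x$ has boundary $\partial' w_*\widetilde x=u_*x$, so it is an admissible integral lift for $u_*x$. Moreover $v_*z$ maps to $(w_*\widetilde x)_\Q$ by the commuting square between $F\to C_\varphi$ and $F'\to C_{\varphi'}$, so it is an admissible rational representative. Proposition~\ref{prop:relative} states that $b_{\varphi'}(u_*x)$ is independent of these choices. Hence $b_{\varphi'}(u_*x)=[v_*z]=v_*b_\varphi(x)$.

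The main obstacle is the boundary compatibility $\partial' w\simeq(\Sigma u)\partial$, possibly up to sign. Since $w$ is not unique when only a homotopy class of square is given, I will work with the specific fill-in constructed from $H$ on explicit mapping cones. On such cones the three squares commute on the nose, up to the standard sign convention, which is the same on both rows. Any different choice of $H$ changes $w$ only by a map factoring through $C_\varphi\to\Sigma E\to F'\to C_{\varphi'}$. Such a change alters $w_*\widetilde x$ by an element from the image of $\pi_{n+1}F'$, which is exactly the integral indeterminacy already quotiented out. So the statement is independent of the fill-in as well.
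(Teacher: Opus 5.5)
Your proposal is correct and follows the paper's own argument. The paper likewise takes the map of cofiber sequences induced by the square and transports the integral lift $\widetilde x$ and its rational representative $z$ through it; your additional checks (that $v_*$ descends to the quotients, and the boundary compatibility $\partial' w\simeq(\Sigma u)\partial$) spell out what the paper leaves implicit. The final paragraph on independence of the fill-in is harmless but unnecessary: any one fill-in compatible with both squares suffices, and the conclusion does not involve $w$.
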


\begin{proof}
The square induces a map of cofiber sequences. Transporting the
integral cofiber lift and its rational representative through this
map gives the stated identity.
\end{proof}

\begin{lemma}[Multiplication by a unit]\label{lem:unit}
Let $\varphi:E\to F$ be a map of associative ring spectra, and let
$u\in\pi_dE$ be a unit. With ordinary left multiplication, for every
$x$ in the domain of $b_\varphi$,
\[
 b_\varphi(ux)=(-1)^d\varphi(u)b_\varphi(x).
\]
In particular, the formula has no sign when $d$ is even.
\end{lemma}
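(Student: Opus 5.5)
The plan is to follow the construction in Proposition~\ref{prop:relative} step by step, using the multiplicative structure on the cofiber sequence. Let $C_\varphi$ be the cofiber of $\varphi$, with maps $j:F\to C_\varphi$ and $\partial:C_\varphi\to\Sigma E$. Since $\varphi$ is a map of ring spectra, $F$ is an $E$-module spectrum via $\varphi$. Then $\varphi$ is a map of left $E$-modules, and so is $C_\varphi$. I would realize the cofiber sequence as a sequence of left $E$-modules, for instance via the homotopy cofiber in $E$-module spectra, so that left multiplication by $u$ acts on $E\to F\to C_\varphi\to\Sigma E$ compatibly up to homotopy.

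Given $x$ with cofiber lift $\widetilde x\in\pi_{n+1}C_\varphi$ and $\partial\widetilde x=x$, the candidate lift for $ux$ is $u\cdot\widetilde x$, using the $E$-module action on $C_\varphi$. The key step is the sign in the boundary: $\partial(u\cdot\widetilde x)=(-1)^d\,u\cdot\partial\widetilde x$. This is the standard sign from commuting the degree-$d$ class $u$ past the suspension coordinate in $\Sigma E$. Equivalently, $\partial$ is a module map $C_\varphi\to\Sigma E$, and the module structure on $\Sigma E$ differs from that on $E$ by the Koszul sign $(-1)^d$. So $(-1)^d u\widetilde x$ is a cofiber lift of $ux$. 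Note that $ux$ is torsion and satisfies $\varphi_*(ux)=\varphi(u)\varphi_*(x)=0$, so it lies in the domain of $b_\varphi$.

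Next, I take the rational representative. If $z\in\pi_{n+1}F\otimes\Q$ satisfies $j(z)=\widetilde x_\Q$, then $j$ is an $E$-module map, so $j(\varphi(u)z)=u\cdot j(z)=(u\widetilde x)_\Q$. Here the action of $u$ on $F$ is left multiplication by $\varphi(u)$, and no sign arises because $j$ involves no suspension. Hence $(-1)^d\varphi(u)z$ represents $b_\varphi(ux)$.

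It remains to check that left multiplication by $\varphi(u)$ preserves the indeterminacy $\im(\pi_{n+1}F)+\varphi_*(\pi_{n+1}E\otimes\Q)$. It preserves the integral image trivially. It also preserves the rational image from $E$, since $\varphi(u)\varphi_*(y)=\varphi_*(uy)$. So the formula is well defined on the quotients, and by naturality of the choices it is independent of the chosen lift.

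I expect the main obstacle to be the sign convention for $\partial$ relative to left versus right multiplication. Some care is needed to fix which suspension convention the paper uses. I would check it on the suspension isomorphism $\pi_{n+1}\Sigma E\cong\pi_nE$ with $u$ acting on the left. The ring is associative but not assumed commutative, so only the left $E$-module structure of the cofiber sequence is used, and this avoids any need for commutativity.
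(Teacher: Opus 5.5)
Your proposal is correct and follows the paper's proof essentially step for step. You realize the cofiber sequence as left $E$-modules, use $\partial(u\widetilde x)=(-1)^d u\,\partial\widetilde x$ to see that $(-1)^d u\widetilde x$ lifts $ux$ with rational representative $(-1)^d\varphi(u)z$, and check that left multiplication by $\varphi(u)$ preserves both indeterminacy subgroups. The paper also notes that $\varphi(u^{-1})$ gives an inverse on the quotients, which it needs later for the periodicity isomorphisms. As you observe, the sign $(-1)^d$ depends on the suspension convention, which is harmless because the lemma is only applied with $d$ even.
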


\begin{proof}
The cofiber sequence is a sequence of left $E$-modules, with $E$
acting on $F$ through $\varphi$. Because its boundary has degree
$-1$,
\[
 \partial(u\widetilde x)=(-1)^d u\partial(\widetilde x).
\]
The sign arises from interchanging the degree-$d$ sphere with the
suspension coordinate in $\Sigma E$. Thus, if $\widetilde x$ lifts
$x$ and $z$ is its rational representative, then
$(-1)^d u\widetilde x$ lifts $ux$ and has rational representative
$(-1)^d\varphi(u)z$. Multiplication by $\varphi(u)$ carries each
indeterminacy subgroup to the corresponding subgroup in the shifted
degree, while multiplication by $\varphi(u^{-1})$ provides the
inverse. Passing to the quotients proves the formula.
\end{proof}

\section{Completed cusp spectra and their integral lattices}\label{sec:cusps}

The integral lattice in a secondary target is determined by the homotopy
of a spectrum. Computing this lattice requires retaining both the order
of completion and the real descent at a cusp. We specify these operations
before computing the full target at prime level, where it has two real
factors.

Let $S_N=\Z[1/N]$. We use the normalized Hill--Lawson compactification
$\overline{\mathcal M}_0(N)$ over $S_N$, with smooth locus
$\mathcal M_0(N)$ and full cusp divisor $D_N$. Write
\[
 E_N=\TMF_0(N)=\Gamma(\mathcal M_0(N),\cO^{\mathrm{top}}).
\]
Here $E_N$ denotes a spectrum; the notation $\mathcal E_p$ below
denotes a modular form.

\subsection{Completion before the rational quotient}
For $S=\Z[1/N]$, define the completed real Tate spectrum by
\begin{equation}\label{eq:ordered-completion}
 R_S(t)=\left(\operatorname*{holim}_r
        (\KO[1/N])[t]/t^r\right)[t^{-1}].
\end{equation}
The truncation is the finite truncated monoid algebra, whose coefficient
module has the $r$ summands $1,t,\ldots,t^{r-1}$. Thus coefficient
localization precedes completion in the parameter, which in turn
precedes inversion of the parameter. Homotopy groups are rationalized
only when forming the quotient~\eqref{eq:relative}.

\begin{lemma}[The completed homotopy groups]\label{lem:completed-coefficients}
In every degree $j$,
\[
 \pi_jR_S(t)=(\pi_j\KO[1/N])((t)).
\]
In particular, with $e_K=\alpha\beta^K$, $|\alpha|=4$, $|\beta|=8$,
and $\beta$ invertible,
\begin{equation}\label{eq:real-lattice}
 \pi_{8K+4}R_S(t)=S((t))e_K,\qquad K\in\Z.
\end{equation}
\end{lemma}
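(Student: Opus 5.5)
The plan is to compute the homotopy of the homotopy limit first and only then invert $t$. For each $r$, the truncated monoid algebra $(\KO[1/N])[t]/t^r$ is the finite wedge $\bigvee_{i=0}^{r-1}\Sigma^0\KO[1/N]\,t^i$ as a $\KO[1/N]$-module. Hence
\[
 \pi_j\bigl((\KO[1/N])[t]/t^r\bigr)=\bigoplus_{i<r}(\pi_j\KO[1/N])t^i=(\pi_j\KO[1/N])[t]/t^r .
\]
The transition maps are the truncations, so they are surjective on homotopy in every degree. The tower of $\pi_{j+1}$ groups is therefore Mittag-Leffler, and $\lim^1$ vanishes. The Milnor sequence then gives
\[
 \pi_j\operatorname*{holim}_r(\KO[1/N])[t]/t^r=\lim_r(\pi_j\KO[1/N])[t]/t^r=(\pi_j\KO[1/N])[[t]].
\]
This is the one place where the order of operations matters. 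Because we localize $\KO$ at $N$ before completing, the coefficients are arbitrary elements of $\pi_j\KO[1/N]$ and carry no bounded denominators issue. I expect this $\lim^1$ bookkeeping to be the only real obstacle, and surjectivity handles it.

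Next I invert $t$. The completed spectrum $P=\operatorname*{holim}_r(\cdots)$ is a module over the sphere monoid ring on $t$, and multiplication by $t$ is a degree-zero self-map. The spectrum $P[t^{-1}]$ is the mapping telescope $\operatorname{hocolim}(P\xrightarrow{t}P\xrightarrow{t}\cdots)$. Homotopy commutes with sequential homotopy colimits, so
\[
 \pi_jR_S(t)=\operatorname{colim}\bigl((\pi_j\KO[1/N])[[t]]\xrightarrow{t}\cdots\bigr)=(\pi_j\KO[1/N])[[t]][t^{-1}].
\]
For any abelian group $A$, this colimit is $A((t))$, the group of Laurent series with finitely many negative terms. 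That proves the first assertion.

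For~\eqref{eq:real-lattice}, I use $\pi_*\KO=\Z[\eta,\alpha,\beta^{\pm1}]/(2\eta,\eta^3,\eta\alpha,\alpha^2-4\beta)$ in the periodic form. The group $\pi_4\KO$ is $\Z\{\alpha\}$, and $\beta$ is invertible, so $\pi_{8K+4}\KO=\Z\{\alpha\beta^K\}$. Localization is exact, which gives $\pi_{8K+4}\KO[1/N]=S\,e_K$. Substituting this into $A((t))$ with $A=S e_K$ yields $S((t))e_K$.
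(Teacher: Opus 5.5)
Your proposal is correct and follows essentially the same route as the paper. You identify the homotopy of each truncation as a finite direct sum, use surjectivity of the transition maps to kill $\lim^1$ in the Milnor sequence and obtain power series, and compute the telescope inverting $t$ as Laurent series with a finite lower bound. You also read off $\pi_{8K+4}\KO[1/N]=S\,e_K$ explicitly from the periodic coefficient ring, a step the paper leaves implicit.
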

\begin{proof}
In degree $j$, the truncations have homotopy
$\bigoplus_{a=0}^{r-1}(\pi_j\KO[1/N])t^a$. Because their transition maps
are surjective in every degree, the Milnor $\lim^1$ term vanishes.
The inverse limit therefore has the product of the coefficient groups,
one for each nonnegative power of $t$. The subsequent localization,
being a filtered telescope, has homotopy consisting of series with a
finite lower Laurent bound. This proves the assertion without a
connectivity assumption on periodic $\KO$.
\end{proof}

To make the rationalization convention explicit, put
\begin{equation}\label{eq:bounded-numerator}
 \Q_S((t))_{\mathrm{bd}}:=S((t))\otimes_{\Z}\Q
 =\{a\in\Q((t)): Da\in S((t))\text{ for some }D\in\Z\setminus\{0\}\}.
\end{equation}
A single denominator $D$ must clear all coefficients away from the primes
dividing $N$. For a prime $\ell\nmid N$, the series
$\sum_{j\ge0}t^j/\ell^j$ lies in $\Q((t))$ but not in
$\Q_S((t))_{\mathrm{bd}}$. Likewise, for $N>1$, $\sum_{j\ge0}t^j/N^j$ belongs to
$S((t))$ but not to $\Z((t))[1/N]$. The rationality of each
individual coefficient does not remove either distinction.

There is a natural comparison
\begin{equation}\label{eq:old-new-comparison}
 \KO((q))[1/N]\longrightarrow R_{S_N}(q).
\end{equation}
The coefficient maps on the finite truncations induce a map first on
their inverse limits and then on their Laurent localizations. This map
factors through $[1/N]$, since $N$ is invertible in the target.
We use the forward comparison for inherited level-one classes; no
inverse or equivalence of the two completed targets is asserted.

\subsection{The full punctured neighborhood}
At the standard cusp, the Tate curve carries the subgroup $\mu_N$.
With the completion convention just fixed, its real restriction map is
\begin{equation}\label{eq:standard-map}
 \Phi_N^\infty:E_N\longrightarrow R_{S_N}(q).
\end{equation}

\begin{proposition}[The punctured full-cusp map]\label{prop:full-cusp-map}
There is a natural map of commutative ring spectra
\begin{equation}\label{eq:full-cusp-map}
 \Phi_N^\partial:E_N\longrightarrow\cK_N^\partial
 :=\Gamma\!\left(\widehat{\overline{\mathcal M}_0(N)}_{D_N}^{\,\times},
                    \cO^{\mathrm{top}}\right).
\end{equation}
Here punctured completion means the completed Tate charts with their
cusp parameter inverted, followed by spectral descent. The construction
includes all cusp components and their descent data and is natural
under forgetting level structure.
\end{proposition}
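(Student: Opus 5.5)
The map will be built from the Hill--Lawson sheaf $\cO^{\mathrm{top}}$ on the compactified stack $\overline{\mathcal M}_0(N)$ \cite[Sections~5--6]{HL}, restricted to the formal neighborhood of the cusp divisor and then punctured. Concretely, $\cK_N^\partial$ is defined as global sections over the punctured formal completion along $D_N$. The map $\Phi_N^\partial$ is then the restriction map for sections, $\Gamma(\mathcal M_0(N),\cO^{\mathrm{top}})\to\Gamma(\widehat{\overline{\mathcal M}_0(N)}_{D_N}^{\,\times},\cO^{\mathrm{top}})$. This restriction exists because the punctured neighborhood maps to the smooth locus $\mathcal M_0(N)$: once the cusp parameter is inverted, the Tate curve with its level structure is a generalized elliptic curve with smooth fibers. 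Since $\cO^{\mathrm{top}}$ is a sheaf of $E_\infty$ ring spectra on the étale site, restriction along any étale-local morphism of stacks gives a map of commutative ring spectra.

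The steps are as follows.

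\textbf{Step 1: étale charts.} I describe an étale cover of $\widehat{\overline{\mathcal M}_0(N)}_{D_N}$ by formal Tate charts. Over a cusp of width $w$, after adjoining suitable roots of unity, such a chart has the form $\mathrm{Spf}\,S'[[t]]$. It carries the Tate curve $\mathrm{Tate}(t^{w'})$ together with a chosen cyclic subgroup of order $N$, and the automorphism group $\{\pm1\}$ (together with any Galois action) supplies the descent data.

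\textbf{Step 2: spectra on the charts.} On each chart, $\cO^{\mathrm{top}}$ evaluates to the completed $K$-theory spectrum of the Tate curve. Following \eqref{eq:ordered-completion}, this is $\operatorname{holim}_r$ of $K$-theory with $t$ truncated, where the coefficients have been localized at $S_N$ before completing. Inverting $t$ yields a spectrum of the form $R_{S'}(t)$ with complex coefficients. The $\pm1$ descent then recovers the real form, as in Lemma~\ref{lem:completed-coefficients}.

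\textbf{Step 3: descent.} Spectral descent (a homotopy limit over the Čech nerve of the cover) defines $\cK_N^\partial$ and receives the compatible restriction maps from $E_N$. Because homotopy limits of $E_\infty$ rings are $E_\infty$, the result is a commutative ring spectrum and the map is one of commutative ring spectra. Inverting the parameter is a localization of $E_\infty$ rings, so it is compatible with all of this.

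\textbf{Step 4: naturality.} The level-forgetting map $\overline{\mathcal M}_0(N)\to\overline{\mathcal M}_0(M)$ for $M\mid N$ carries $D_N$ into $D_M$. It therefore induces maps of formal completions and of punctured completions, and naturality follows from functoriality of sections of $\cO^{\mathrm{top}}$.

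The main obstacle is Step 2. There one must show that the Hill--Lawson value of $\cO^{\mathrm{top}}$ on the formal neighborhood agrees with the ordered completion \eqref{eq:ordered-completion}: localize first, then complete, then invert $t$. The alternative $\KO((q))[1/N]$ differs, by the comparison \eqref{eq:old-new-comparison}, so the order matters. I would handle this by noting that $\cO^{\mathrm{top}}$ on $\mathrm{Spf}$ is by definition the limit of its values on the infinitesimal thickenings $\mathrm{Spec}\,S'[t]/t^r$. On each such thickening the value is $K$-theory with $S_N$-coefficients tensored with the truncated algebra. The surjectivity argument from Lemma~\ref{lem:completed-coefficients} then identifies the limit, and localization at $t$ is applied afterwards.
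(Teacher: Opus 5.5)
Your outline has the same shape as the paper's: Tate charts built as inverse limits of truncations with $S_N$-coefficients, then inversion of the parameter, then descent, then level-forgetting. However, the step that actually produces $\Phi_N^\partial$ does not work as you justify it.

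The punctured charts $\mathrm{Spec}\,S'((t))$ do map to $\mathcal M_0(N)$ on underlying stacks. That map is not étale, because it is not locally of finite presentation. So it is not an object of the site on which $\cO^{\mathrm{top}}$ is a sheaf, and the sheaf property gives neither a value there nor a restriction map. Read literally, the ``punctured formal completion'' as an open formal substack is empty, since the completion along $D_N$ has underlying space $D_N$; the paper explicitly flags this. The same objection applies to your Step~2. The thickenings $\mathrm{Spec}\,S'[t]/t^r$ are not étale either, so $\cO^{\mathrm{top}}$ on $\mathrm{Spf}$ is not ``by definition'' the limit over them.

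The missing input is Hill--Lawson's theory, which supplies an $E_\infty$ map from sections on the smooth locus to these Tate spectra, compatible with chart morphisms. It has two parts:
\begin{enumerate}
\item their realization of completed Tate charts by inverse limits of monoid truncations over multiplicative $K$-theory \cite[Theorem~5.7, Corollary~5.8]{HL};
\item their smooth/Tate comparison and arithmetic gluing \cite[Propositions~5.9 and~5.12, Section~5.3]{HL}.
\end{enumerate}
The paper's proof rests on exactly these. It applies the localize--complete--invert order chart by chart and then takes sections of the descent diagram.

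Your naturality step has a smaller gap of the same kind. That the level-forgetting map carries $D_N$ into $D_M$ gives a map of formal completions. To pass to the punctured ones, you need the pulled-back cusp parameter to become invertible after puncturing. The paper gets this from the fact that it is locally a unit times a positive power of the finer parameter, so the completion filtrations are cofinal. This comes after extending the level-forgetting map to the normalized compactification \cite[Proposition~3.19, Theorem~6.1]{HL}. The map on actual spectra then comes from compatibility of the Tate realizations, not from functoriality of étale sections.

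Finally, the $C_2$-descent recovering the real form is not what Lemma~\ref{lem:completed-coefficients} provides. Its commutation with completion and Laurent localization is handled separately in Lemma~\ref{lem:prime-cusps}, via Wood's theorem and Galois descent.
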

\begin{proof}
Hill--Lawson realize the completed Tate charts by inverse limits of
finite monoid truncations over multiplicative $K$-theory
\cite[Section~5.1, Theorem~5.7 and Corollary~5.8]{HL}. On each chart,
we first localize the coefficients over $S_N$, then complete its
parameter, and finally invert that parameter. The resulting maps
commute with the chart morphisms. The smooth/Tate comparison and
arithmetic gluing
\cite[Propositions~5.9 and~5.12, Section~5.3]{HL} supply restriction
from the smooth locus, and taking sections of the descent diagram gives
\eqref{eq:full-cusp-map}. This construction is distinct from the empty
ordinary open formal subscheme obtained by inverting an ideal of
definition.

A level-forgetting morphism extends to the normalized compactification
\cite[Proposition~3.19 and Theorem~6.1]{HL}. Locally, a cusp parameter
pulls back to a unit times a positive power of a cusp parameter, so the
resulting completion filtrations are cofinal. The pulled-back parameter
becomes invertible after puncturing, and the compatible Tate
realizations therefore give the claimed naturality on actual spectra.
\end{proof}

\begin{definition}[Joint all-cusp invariant]\label{def:full-cusp}
For a torsion class $y\in\pi_{8K+3}E_N$ killed by
$(\Phi_N^\partial)_*$, let $b_N^\partial(y)=b_{\Phi_N^\partial}(y)$,
with target
\begin{equation}\label{eq:joint-value}
 \cV_{N,K}^\partial=
 \frac{\pi_{8K+4}\cK_N^\partial\otimes\Q}
 {\im\pi_{8K+4}\cK_N^\partial+
  (\Phi_N^\partial)_*(\pi_{8K+4}E_N\otimes\Q)}.
\end{equation}
\end{definition}
The numerator and both indeterminacies in this expression are homotopy
groups of global sections. In particular, the definition of a section
of a homotopy sheaf does not make it an integral element of this
denominator.

\subsection{The two real factors at an odd prime}
\begin{lemma}[The full prime-level cusp spectrum]\label{lem:prime-cusps}
Let $p$ be an odd prime and $S=\Z[1/p]$. Using the invariant elliptic
differential $d\log(z)$ on the Tate curve, there is an equivalence
\begin{equation}\label{eq:prime-cusps}
 \cK_p^\partial\simeq R_S(q)\times R_S(t).
\end{equation}
The two maps from the completed level-one cusp have parameters
$q\mapsto q$ and $q\mapsto t^p$, respectively. Their elliptic curves
and subgroups are
\begin{equation}\label{eq:tate-branches}
 (\Tate(q),\mu_p),\qquad (\Tate(t^p),\langle t\rangle).
\end{equation}
Thus~\eqref{eq:prime-cusps} identifies actual integral homotopy groups,
including the real lattice at the prime two.
\end{lemma}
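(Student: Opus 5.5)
The plan is to compute $\cK_p^\partial$ directly from its description in Proposition~\ref{prop:full-cusp-map}. It is the spectrum of sections of $\cO^{\mathrm{top}}$ over the punctured formal neighbourhood of the cusp divisor $D_p$ of $\overline{\mathcal M}_0(p)$ over $S=\Z[1/p]$. The first step is the classical cusp count for $\Gamma_0(p)$, $p$ prime. There are exactly two cusps, $\infty$ and $0$, with widths $1$ and $p$, and both are defined over $\Z[1/p]$. Hence $D_p$ is a disjoint union of two sections, and the descent diagram computing $\cK_p^\partial$ splits as a product of the two corresponding local diagrams. So it suffices to identify each local factor with a completed real Tate spectrum $R_S(-)$.

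At $\infty$, the formal neighbourhood is the Tate curve $\Tate(q)$ with the canonical subgroup $\mu_p\subset\Tate(q)$. The pair $(\Tate(q),\mu_p)$ has no automorphisms beyond $\pm1$, so the chart parameter is $q$ itself. At $0$, the neighbourhood is parametrized by $t$ with $t^p=q$, carrying the étale subgroup $\langle t\rangle\subset\Tate(t^p)$. Over $S$ this subgroup is rational, so no Galois twist occurs. Again the only automorphism is $[-1]$.

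In the Hill--Lawson realization \cite[Section~5.1]{HL}, each chart is $\KU$-valued. Descent along the residual $\{\pm1\}$ acting by complex conjugation (the inversion $[-1]$) yields $\KO$, exactly as at level one. Note that $p$ odd ensures $[-1]$ acts nontrivially and freely in the sense needed. Following the order of operations~\eqref{eq:ordered-completion}, each factor becomes $\bigl(\operatorname{holim}_r\KO[1/p][t]/t^r\bigr)[t^{-1}]=R_S(t)$. The maps from the level-one punctured cusp are read off from the moduli: the forgetful map sends $(\Tate(q),\mu_p)\mapsto\Tate(q)$, giving $q\mapsto q$, and $(\Tate(t^p),\langle t\rangle)\mapsto\Tate(t^p)$, giving $q\mapsto t^p$. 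Naturality of Proposition~\ref{prop:full-cusp-map} makes these compatible with the spectral maps.

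The last sentence of the statement then follows from Lemma~\ref{lem:completed-coefficients}, because the equivalence is one of spectra, not merely of homotopy sheaves. Concretely, $\pi_{8K+4}\cK_p^\partial=S((q))e_K\oplus S((t))e_K$ with the real generator $\alpha$, not $2\times$ Bott. The choice of $d\log(z)$ fixes the trivialization of $\omega$ on both charts, so the identification carries no unit ambiguity in weight.

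The main obstacle I expect is the real descent at the cusp $0$. One must check that the $\{\pm1\}$-action on the $\KU$-Tate chart is precisely complex conjugation with trivial action on $t$. Otherwise a twisted form of $\KO$, such as $\KU^{h C_2}$ with a nontrivial character, could appear and change the $2$-primary lattice. I would verify this by noting that $[-1]$ fixes the parameter $t$ (it preserves $\langle t\rangle$ and the Tate uniformization up to inversion) and acts on $\omega$ by $-1$. This is the same local situation as the level-one cusp, where Hill--Lawson obtain $\KO((q))$. The rest is then bookkeeping with the product decomposition.
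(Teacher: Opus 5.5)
Your reduction to two branches is sound. It differs from the paper only in where the input comes from. You import the cusp structure of $X_0(p)$ over $\Z[1/p]$: two rational cusps of widths $1$ and $p$, with Tate data $(\Tate(q),\mu_p)$ and $(\Tate(t^p),\langle t\rangle)$, together with Hill--Lawson's realizations of each chart. The paper instead derives the branches from the algebra $B_p=S((q))\times S((t))$ of cyclic order-$p$ subgroups of $\Tate(q)[p]$. It checks rank $p+1$ and normality, so that no branch is missed. It then realizes $B_p$ spectrally as an \'etale $R_S(q)$-algebra via the Mathew--Meier classification, which also produces the map $q\mapsto t^p$ internally.

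The gap is in the real descent step. In $\cK_p^\partial$ the charts are localized at $p$, completed and punctured \emph{before} descent. So what must be identified with $R_S(t)$ is $U^{hC_2}$, where $U=((\KU[1/p])[[t]])[t^{-1}]$.

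Your sentence ``descent \ldots\ yields $\KO$, exactly as at level one'' does not cover this. The level-one identification you appeal to concerns the completed chart, not the chart after coefficient localization and puncturing. Homotopy fixed points commute with the inverse limit over truncations, but in general they do not commute with the telescopes defining $[1/p]$ and $[t^{-1}]$. This is exactly the step that fixes the real lattice at $2$. Taking invariants of homotopy groups instead would give $\Z u^2$ in degree $4$ rather than $\Z\alpha$, and $e_K/2$ would then look integral.

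The paper closes this step using that $\KO\to\KU$ is a faithful $C_2$-Galois extension with $\KU$ dualizable over $\KO$ (Wood). Tensoring with $\KU$ then commutes with both the limit and the Laurent colimit, so $R_S(t)\otimes_{\KO}\KU\simeq U$. Since faithful Galois extensions are stable under base change, $R_S(t)\simeq U^{hC_2}$, with no interchange of fixed points and colimits.

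Your check that inversion acts by conjugation on $\KU$ with $t$ fixed is also needed, and the paper makes it too, but that is not where the difficulty lies. Finally, the hypothesis that $p$ is odd is not what makes $[-1]$ act. Indeed $-I\in\Gamma_0(N)$ for every $N$, and the Galois action on $\KU$ is free regardless. The hypothesis matters because $2$ is not inverted in $S$, so the lattice statement at $2$ has content.
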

\begin{proof}
Put $A=S((q))$. On the punctured Tate chart, the finite etale group
$\Tate(q)[p]$ is an extension of $\Z/p$ by $\mu_p$. Locally, a cyclic
subgroup of order $p$ either equals $\mu_p$ or projects isomorphically
onto $\Z/p$. In the latter case, its unique element over $1$ is
represented by a root $t$ of $t^p=q$. This fppf-local description
represents the full finite etale subgroup scheme, with algebra
\begin{equation}\label{eq:subgroup-algebra}
 B_p=A\times A[t]/(t^p-q)=S((q))\times S((t)).
\end{equation}
The polynomial extension is etale because $pt^{p-1}$ is a unit.
For the second equality, group a Laurent series in $t$ by its exponent
modulo $p$: each of the $p$ resulting series in $q=t^p$ has a finite
lower bound. The algebra has rank $p+1$, so neither cusp branch has
been omitted.

Before puncturing, the normalized branches are $S[[q]]$ and $S[[t]]$.
They are finite and normal over $S[[q]]$, with the fraction algebras
in~\eqref{eq:subgroup-algebra}, and therefore compute the normalization.
Their cusp widths are $1$ and $p$, as in the level compactification of
\cite[Proposition~3.19]{HL}.

The completed level-one Tate stack retains the constant group $C_2$,
which acts by inversion on the elliptic curve
\cite[Proposition~3.15]{HL}. Inversion fixes each cyclic subgroup as a
subgroup and hence fixes $B_p$, although it would not fix a chosen
generator for $\Gamma_1(p)$-structure. This inversion action persists
in characteristic two: in the coordinate $x=z-1$, its formal action
is $x\mapsto-x/(1+x)$, which over $\F_2$ is
$x+x^2+x^3+\cdots$, not $x$.

We next compute spectral sections of these stack components. Set
$R=R_S(q)$ and $U=(\KU[1/p])[[q]][q^{-1}]$, with the same ordered
completion as in~\eqref{eq:ordered-completion}. The etale $R$-algebra
$R_{B_p}$ associated to~\eqref{eq:subgroup-algebra} has
\[
 \pi_*R_{B_p}=\pi_*R\otimes_A B_p.
\]
Etale algebras, including their morphisms, are classified by their
degree-zero etale algebra even for nonconnective ring spectra
\cite[Theorem~2.12, Corollary~2.13 and Example~2.14]{MM}.
By the same grouping of powers, the explicit algebra
$R_S(q)\times R_S(t)$ has these homotopy groups in every degree
and thus realizes $R_{B_p}$.

On the complex Tate chart, the level pullback is the corresponding
etale $U$-algebra, whose inversion action is conjugation on $\KU$
with the subgroup coefficients fixed. The Tate realization and
smooth/Tate comparison cited above identify this full coherent descent
datum with the complex base change of $R_{B_p}$.

The compatibility of this descent with the specified completion uses
the faithful $C_2$-Galois extension $\KO\to\KU$ and the dualizability
of $\KU$ over $\KO$ supplied by Wood's theorem
\cite[Section~5.1, Example~5.6, Theorem~5.7 and Example~6.1]{MM}.
Consequently, tensoring with $\KU$ commutes with the inverse limit
of the truncations and with Laurent localization, giving
\[
 R\otimes_{\KO}\KU\simeq U.
\]
Faithful Galois descent is preserved under base change. Hence
\[
 R\simeq U^{hC_2},\qquad
 R_{B_p}\simeq(U\otimes_RR_{B_p})^{hC_2}.
\]
Taking sections of the two actual Tate components now proves
\eqref{eq:prime-cusps}. The argument neither interchanges arbitrary
homotopy fixed points with a Laurent colimit nor replaces homotopy
fixed points by invariants of homotopy groups.
\end{proof}

Complexification sends the compatible Bott generators to
\begin{equation}\label{eq:complexification}
 c(\alpha)=2u^2,\qquad c(\beta)=u^4,\qquad
 c(e_K)=2u^{4K+2},\quad |u|=2.
\end{equation}
Thus $e_K/2$ is not integral in either real factor of
\eqref{eq:prime-cusps}, although its complexification is integral.
This distinction makes the real spectrum computation essential to
the witness constructed below.

\subsection{A rational modular form in global homotopy}
\begin{lemma}[Rational global source]\label{lem:rational-source}
For every integer $w$, naturally under level restriction,
\begin{equation}\label{eq:rational-source}
 \pi_{2w}E_N\otimes\Q
 \cong M_w^!(\Gamma_0(N);\Q).
\end{equation}
The edge map is Tate expansion with the invariant elliptic differential.
For $w=4K+2$, its coordinate in a real cusp factor is
\begin{equation}\label{eq:half-normalization}
 \frac12\Exp_c(f)e_K.
\end{equation}
\end{lemma}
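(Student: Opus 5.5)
The plan is to compute rationally using the descent spectral sequence for $\cO^{\mathrm{top}}$ on $\mathcal M_0(N)$ over $S_N$, and then to read off the cusp coordinate from the complex Tate chart via complexification. Rationally the sheaf $\pi_{2w}\cO^{\mathrm{top}}\otimes\Q$ is $\omega^{\otimes w}\otimes\Q$. The odd homotopy sheaves vanish rationally. The higher cohomology of these quasi-coherent sheaves on the stack over $\Q$ vanishes: the coarse space of the smooth locus is affine, since it is a proper curve minus the nonempty cusp divisor, and the stabilizers are finite, hence invertible after rationalizing. The spectral sequence therefore collapses onto the $0$-line after tensoring with $\Q$. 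Because $\Q$ is flat and the descent spectral sequence converges strongly (by the usual uniform vanishing line after inverting $6N$, or directly rationally), we get
\[
 \pi_{2w}E_N\otimes\Q\cong H^0(\mathcal M_0(N)_{\Q},\omega^{\otimes w}).
\]
Sections over the smooth locus are exactly the meromorphic modular forms with poles only at the cusps, which is $M^!_w(\Gamma_0(N);\Q)$. The odd degrees are rationally zero. Naturality under level restriction follows because the forgetful maps of Proposition~\ref{prop:full-cusp-map} induce pullback of sections of $\omega^{\otimes w}$.

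For the edge map, compose with $\Phi_N^\partial$ and use Proposition~\ref{prop:naturality} applied to the map of descent spectral sequences. On each Tate chart the $0$-line map sends a section of $\omega^{\otimes w}$ to its expansion $\Exp_c(f)$ in the trivialization given by the invariant differential $d\log z$. One subtlety is integrality versus boundedness. A weakly holomorphic form with rational coefficients at $\Gamma_0(N)$ has bounded denominators, because $M^!_w$ is spanned over $\Q$ by integral forms times negative powers of $\Delta$. Its expansions therefore lie in $\Q_S((t))_{\mathrm{bd}}$ of \eqref{eq:bounded-numerator}, which is $\pi_{8K+4}R_S(t)\otimes\Q$ by Lemma~\ref{lem:completed-coefficients}. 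I would check this explicitly, since it is what makes the edge map land in the stated numerator.

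For the normalization \eqref{eq:half-normalization} with $w=4K+2$, I would pass to the complex chart $U$. There the class corresponding to $f$ has coordinate $\Exp_c(f)u^{w}=\Exp_c(f)u^{4K+2}$, because $u$ corresponds to the invariant differential and $\pi_{2w}\KU((q))=\Z((q))u^w$. Complexification is injective rationally on $\pi_{8K+4}$ of the real factor, using Lemma~\ref{lem:prime-cusps} in the prime case and the $C_2$-descent $R\simeq U^{hC_2}$ in general. By \eqref{eq:complexification}, $c(e_K)=2u^{4K+2}$, so the real coordinate is forced to be $\tfrac12\Exp_c(f)e_K$.

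I expect the main obstacle to be the identification of the spectral edge map with $q$-expansion \emph{with this specific generator}. This amounts to checking that the Hill--Lawson Tate realization identifies $\omega$ at the cusp with $\pi_2\KU$ via $d\log z$ up to sign, compatibly with the $C_2$-action, so that no extra unit or sign enters. I would settle it first at level one, where it is the standard Tate-curve computation of $\TMF\to\KO((q))$ via \eqref{eq:old-new-comparison}. I would then transport it along the étale level maps, which preserve the invariant differential because the Tate curve itself is pulled back along $q\mapsto q$ or $q\mapsto t^p$.
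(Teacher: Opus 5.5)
Your proposal is correct. Its second half matches the paper's argument: the edge map is Tate expansion in the $d\log z$ trivialization, and the real coordinate is read off from $c(e_K)=2u^{4K+2}$ together with rational injectivity of complexification.

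The genuine difference is how you move rationalization past global sections.

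\textbf{The paper's route.} It proves a spectrum-level statement. It checks that $\mathcal M_0(N)$ is flat and relatively tame over the formal-group stack; the relevant point at $2$ and $3$ is that the relative inertia is trivial. It then invokes the Mathew--Meier theorem that $\Gamma$ commutes with homotopy colimits, which gives $E_N\otimes\Q\simeq\Gamma(X,\cO^{\mathrm{top}}\otimes\Q)$. Only after that does it run a rational descent spectral sequence, killing higher cohomology via a presentation $X_\Q=[Y/G]$ with $Y$ affine.

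\textbf{Your route.} You rationalize the integral descent spectral sequence over $S_N$. This works precisely because a horizontal vanishing line at a finite page makes the filtration on $\pi_{2w}E_N$ finite, so $F^1\pi_{2w}E_N$ is an iterated extension of torsion groups and hence torsion. Strong convergence alone would not suffice: a complete filtration with torsion graded pieces can have a non-torsion limit, as with $\Z_2$.

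Both of your parenthetical justifications should be dropped. Working ``after inverting $6N$'' again requires commuting a telescope past $\Gamma$. Arguing ``directly rationally'' presupposes the very interchange in question.

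The vanishing line you actually need is the integral one over $\Z[1/N]$, including the primes $2$ and $3$. It is a genuine theorem and must be cited; it rests on the same nilpotence phenomena that underlie the Mathew--Meier input. With it, your argument stays entirely on homotopy groups, whereas the paper's yields the stronger equivalence of spectra.

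Three smaller points:
\begin{enumerate}
\item Your tame-stack argument, using exact pushforward to the affine coarse curve $Y_0(N)_\Q$, is an equally valid substitute for the $[Y/G]$ presentation.
\item Your explicit bounded-denominator check is harmless but redundant. Boundedness is automatic because the image comes from an actual spectrum map.
\item Proposition~\ref{prop:naturality} concerns secondary invariants, not descent spectral sequences. What you need there is plain functoriality of the descent spectral sequence under $\Phi_N^\partial$.
\end{enumerate}
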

\begin{proof}
The stack $X=\mathcal M_0(N)$ is noetherian and separated. After $N$
is inverted, its level-forgetting map is representable finite etale,
so its map to the formal-group stack is flat. Its relative inertia
kernel is trivial: an elliptic-curve automorphism identical on the
formal completion at the identity is identical on the curve.
For example, equality on the completed local ring forces equality
on the function field and hence on the smooth proper curve. This
verifies relative tameness even in characteristics where the full
modular stabilizer has noninvertible order.

The global-sections theorem of Mathew--Meier
\cite[Definition~2.28 and Theorem~4.14]{MM} therefore applies to the
even-periodic refinement and shows that global sections commute with
homotopy colimits. In particular, it justifies the comparison
\[
 E_N\otimes\Q\simeq\Gamma(X,\cO^{\mathrm{top}}\otimes\Q).
\]
With this comparison established, we calculate rational descent.

Over $\Q$, choose a full auxiliary level $m\ge3$ divisible by $N$,
retaining all its components. The resulting smooth modular scheme
$Y$ is affine, since a nonempty cusp divisor has been removed from
each proper modular-curve component. The stack $X_{\Q}$ is $[Y/G]$
for a finite group $G$. Because taking invariants is exact over $\Q$,
$H^s(X_{\Q},\omega^w)=0$ for $s>0$ and every integer $w$.
The rational descent spectral sequence therefore has cohomological
dimension zero and converges to the edge
identification~\eqref{eq:rational-source}, including negative weights.
Sections on the smooth stack allow finite poles at the omitted cusps.

The edge map is natural under Tate restriction. In the complex
coordinate, the expansion is $\Exp_c(f)u^w$;
equation~\eqref{eq:complexification} then gives
\eqref{eq:half-normalization}. The source is an actual rational global
homotopy class, not only a rational homotopy-sheaf section.
\end{proof}

It follows that the standard-cusp value group, in the coordinate $e_K$, is
\begin{equation}\label{eq:standard-value}
 \cV_{N,K}^{\infty}\cong
 \frac{\Q_{S_N}((q))_{\mathrm{bd}}}
 {S_N((q))+
  \{\tfrac12\Exp_\infty(f):f\in M_{4K+2}^!(\Gamma_0(N);\Q)\}}.
\end{equation}
The source image lies in the bounded numerator because it is the
image of $\pi_{8K+4}E_N\otimes\Q$ under an actual spectrum map.
Likewise, the rational source in the joint quotient is obtained by
simultaneously restricting one global class, rather than choosing
a class independently at each cusp.

Let $\cV_{1,K}$ denote the quotient~\eqref{eq:relative} for the
inherited level-one map $\Phi_1:\TMF\to\KO((q))$. Its localization
$\cV_{1,K}[1/N]$ uses the localized inherited target, before the
forward comparison~\eqref{eq:old-new-comparison}.
\begin{corollary}[Naturality from level one]\label{cor:level-naturality}
The forgetful map $\iota_N:\TMF\to E_N$ and cusp restriction induce
\[
 \rho_{N,K}^\partial:\cV_{1,K}[1/N]\longrightarrow\cV_{N,K}^\partial,
 \qquad
 b_N^\partial(\iota_Nx)=\rho_{N,K}^\partial(b_1(x))
\]
for every torsion $x\in\pi_{8K+3}\TMF[1/N]$.
\end{corollary}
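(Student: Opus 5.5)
The corollary follows from Proposition~\ref{prop:naturality} applied to a homotopy-commutative square comparing level one with level $N$. The plan is to build that square after inverting $N$, check that it induces a map of the value quotients, and then read off the formula.

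\emph{The square.} The left vertical arrow is $\iota_N[1/N]:\TMF[1/N]\to E_N$. The top arrow is the localized inherited map $\Phi_1[1/N]:\TMF[1/N]\to\KO((q))[1/N]$. The bottom arrow is $\Phi_N^\partial:E_N\to\cK_N^\partial$. For the right vertical arrow $v$, we take the forward comparison \eqref{eq:old-new-comparison} into $R_{S_N}(q)$. We then follow it with the map from the completed level-one cusp to the punctured full-cusp spectrum, which comes from the naturality under forgetting level structure in Proposition~\ref{prop:full-cusp-map}; at an odd prime this is the map $q\mapsto(q,t^p)$ of Lemma~\ref{lem:prime-cusps}.

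Commutativity up to homotopy is the substantive step. The level-forgetting morphism extends to the compactification, and the cusp parameters are cofinal, so the restriction to the punctured cusp neighbourhood of $\overline{\mathcal M}_0(N)$, precomposed with $\iota_N$, agrees with the pullback of the level-one Tate restriction. The one point to verify is that the level-one Tate map $\TMF\to\KO((q))$, composed with \eqref{eq:old-new-comparison}, agrees with the Hill--Lawson completed Tate chart. I would argue this through the finite truncations $(\KO[1/N])[q]/q^r$: both maps are induced from the same maps on those truncations, and a map into an inverse limit is determined by these compatible maps once one passes to the Laurent localization. I expect this identification of two models of the Tate restriction to be the main obstacle. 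If it is not available on the nose, the fallback is to pass through a common refinement and use that the Milnor $\lim^1$ vanishes, as in Lemma~\ref{lem:completed-coefficients}, so that homotopy classes of maps are detected on the truncations.

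\emph{Domain and value groups.} Since $N$ is inverted, a torsion class $x\in\pi_{8K+3}\TMF[1/N]$ killed by $\Phi_1[1/N]$ is in the domain of $b$ for the localized map, and that invariant is $b_1(x)$ localized, with value group $\cV_{1,K}[1/N]$. Localization is exact and commutes with forming the quotient \eqref{eq:relative}, so the indeterminacies localize as well. Proposition~\ref{prop:naturality} then gives the map $\rho^\partial_{N,K}=v_*$ on quotients, which automatically sends the integral image to the integral image and the rational source image to the rational source image. Applied to $x$, it yields $b_N^\partial(\iota_Nx)=\rho_{N,K}^\partial(b_1(x))$. Here $\iota_Nx$ is killed by $\Phi_N^\partial$ because the square commutes, and it lies in degree $8K+3$ as Definition~\ref{def:full-cusp} requires.
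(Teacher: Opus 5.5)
Your proposal is essentially the paper's argument: Proposition~\ref{prop:naturality} applied to the forward comparison \eqref{eq:old-new-comparison} stacked on the level-forgetting restriction square of Proposition~\ref{prop:full-cusp-map}. The paper treats the commutativity you flag as the main obstacle as part of how the forward comparison is built from the truncations, so it gives no separate argument there, and in particular no $\lim^1$ argument for maps out of $\TMF$. The one step you should add is the paper's observation that $\pi_{8K+3}\KO((q))=0$: every torsion $x\in\pi_{8K+3}\TMF[1/N]$ is then automatically killed by $\Phi_1[1/N]$, and hence $\iota_Nx$ is killed by $\Phi_N^\partial$. You instead imposed this as an extra hypothesis, whereas the statement applies to every torsion $x$.
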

\begin{proof}
Apply Proposition~\ref{prop:naturality} to the forward comparison
and the actual level-forgetting restriction square of
Proposition~\ref{prop:full-cusp-map}. Since
$\pi_{8K+3}\KO((q))=0$, the level-one primary image is zero;
the imported class therefore lies in the primary kernel at level $N$.
\end{proof}

\section{One arithmetic form and two global homotopy witnesses}\label{sec:joint}

The prime-level target has now been identified as an actual product
of real spectra. We construct one weight-two form whose two expansions
are congruent to $1$ modulo $2$ and use these expansions to write an
integral element of the target. A separate rational global class
supplies the correction. Both classes are then transported to composite
level through a spectrum map.

An elementary coefficient test clarifies the distinction between this
construction and a sheaf-level integrality argument.
\begin{lemma}[Faithfully flat integrality]\label{lem:ff-integral}
Let $A\to B$ be faithfully flat and let $M\subset M_\Q$ be an $A$-module inside its
rationalization.  If $x\in M_\Q$ maps into $M\otimes_A B$, then $x\in M$.  If moreover
$x\otimes1$ is divisible by $2$ in $M\otimes_A B$, then $x$ is divisible by $2$ in $M$.
\end{lemma}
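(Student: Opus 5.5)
The plan is to reduce both assertions to the standard fact that a module whose base change along a faithfully flat map vanishes must itself vanish. First I will fix the ambient group in which ``$x$ maps into $M\otimes_AB$'' is read. Rationalization commutes with tensor products, so $M_\Q\otimes_AB\cong(M\otimes_AB)_\Q$. Since $B$ is flat over $A$, applying $-\otimes_AB$ to the inclusion $M\subset M_\Q$ gives an injection $M\otimes_AB\hookrightarrow M_\Q\otimes_AB$. So $M\otimes_AB$ is a genuine submodule of the $\Q$-vector space $M_\Q\otimes_AB$, and the hypothesis means that $x\otimes1$ lies in this submodule.

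For the first assertion, let $M'=M+Ax\subset M_\Q$ and $Q=M'/M$. Flatness of $B$ makes $0\to M\otimes_AB\to M'\otimes_AB\to Q\otimes_AB\to0$ exact. It also identifies $M'\otimes_AB$ with the submodule $M\otimes_AB+B(x\otimes1)$ of $M_\Q\otimes_AB$, again because tensoring the inclusion $M'\subset M_\Q$ with $B$ stays injective. By hypothesis $x\otimes1\in M\otimes_AB$, so $M'\otimes_AB=M\otimes_AB$ and hence $Q\otimes_AB=0$. Faithful flatness then gives $Q=0$, that is, $x\in M$.

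For the second assertion, suppose $x\otimes1=2y$ with $y\in M\otimes_AB$. I will apply the first part to the element $x/2\in M_\Q$. In the $\Q$-vector space $M_\Q\otimes_AB$, we have $(x/2)\otimes1=\tfrac12(x\otimes1)=y$. Division by $2$ is unique there, so $(x/2)\otimes1$ lies in $M\otimes_AB$. The first part gives $x/2\in M$, which means $x=2\cdot(x/2)$ is divisible by $2$ in $M$.

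The only delicate step is the first paragraph. One must check that the various tensor products embed compatibly in $M_\Q\otimes_AB$, which is exactly where flatness, rather than faithfulness, is used; the rest is formal.
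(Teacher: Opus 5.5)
Your proof is correct and is essentially the paper's argument: the paper observes that the class of $x$ in $M_\Q/M$ becomes zero in $(M_\Q/M)\otimes_A B$ and hence is already zero by faithful flatness, which is exactly what your cyclic module $Q=(M+Ax)/M$ with $Q\otimes_A B=0$ unpacks. The only variation is the divisibility claim: the paper applies faithful flatness to the class of $x$ in $M/2M$, while you deduce it from the first part applied to $x/2$, using unique $2$-divisibility of $M_\Q\otimes_A B$; the two arguments are equally short.
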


\begin{proof}
The class of $x$ in $(M_\Q/M)$ vanishes after tensoring with the
faithfully flat algebra $B$, and hence vanishes already. For the second
statement, apply faithful flatness to the class of $x$ in $M/2M$.
\end{proof}
The lemma tests membership in a specified module; when applied to a
homotopy sheaf, it does not produce a lift to the homotopy of its global
sections. Lemma~\ref{lem:prime-cusps} supplies the actual module in
which we construct the integral witness below.

\subsection{The second-cusp expansion, including its constant term}
Use the determinant-normalized weight-two slash operator
\[
 (f|\gamma)(z)=\det(\gamma)(cz+d)^{-2}f(\gamma z),
 \qquad
 W_p=\begin{pmatrix}0&-1\\p&0\end{pmatrix},\quad
 S_0=\begin{pmatrix}0&-1\\1&0\end{pmatrix},\quad
 T=\begin{pmatrix}1&1\\0&1\end{pmatrix}.
\]
\begin{lemma}[Prime-level weight-two trace]\label{lem:prime-trace}
If $f\in M_2(\Gamma_0(p);\Q)$ is holomorphic at all cusps, with
standard expansion $\sum_{r\ge0}a_rq^r$, its expansion on the second
chart of~\eqref{eq:tate-branches} is
\begin{equation}\label{eq:second-expansion}
 \Exp_0(f)(t)=-\frac1p\sum_{r\ge0}a_{pr}t^r.
\end{equation}
\end{lemma}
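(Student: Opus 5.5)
The plan is to compute the second-cusp expansion analytically, by a trace argument down to level one, and then translate it to the algebraic chart $(\Tate(t^p),\langle t\rangle)$ of~\eqref{eq:tate-branches}. The key input is that $M_2(\mathrm{SL}_2(\Z))=0$ for holomorphic forms. Hence, for any $g\in M_2(\Gamma_0(p);\Q)$ holomorphic at both cusps, the trace
\[
 \tr(g)=g+\sum_{j=0}^{p-1}g|S_0T^j
\]
vanishes. This uses the coset decomposition $\mathrm{SL}_2(\Z)=\Gamma_0(p)\sqcup\bigsqcup_j\Gamma_0(p)S_0T^j$ of index $p+1$. The resulting form is level-one, weight two and holomorphic, so it is zero.

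I will apply this with $g=f|W_p$, which lies in $M_2(\Gamma_0(p);\Q)$ because $W_p$ normalizes $\Gamma_0(p)$. Since $W_p^2=-p$ acts trivially in even weight under the determinant normalization, the first term of the trace is $f|W_p$ itself. For the other terms, compute $W_pS_0=-\mathrm{diag}(1,p)$, so that $f|W_pS_0(z)=p\cdot p^{-2}f(z/p)=\tfrac1p f(z/p)$. Averaging over $T^j$ kills every Fourier exponent not divisible by $p$:
\[
 \sum_{j=0}^{p-1}\tfrac1p f\!\left(\tfrac{z+j}{p}\right)=\sum_{r\ge0}a_{pr}q^r .
\]
The vanishing of the trace therefore gives $f|W_p=-\sum_{r\ge0}a_{pr}q^r$. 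A direct check of the slash action gives $(f|S_0)(z)=(f|W_p)(z/p)$. In the local parameter $t=e^{2\pi i z/p}$ at the cusp $0$, which has width $p$, this becomes
\[
 f|S_0=-\sum_{r\ge0}a_{pr}t^r .
\]

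The remaining step, and the one I expect to be the main obstacle, is matching $f|S_0$ with $\Exp_0(f)$. Here $\Exp_0(f)$ is defined via the invariant differential $d\log(z)$ on $\Tate(t^p)$ with subgroup $\langle t\rangle$, and the matching should produce exactly one factor of $1/p$. To do this, I will transport the pair $(\mathbb C/(\Z+\tau\Z),\langle 1/p\rangle)$ under $S_0$ to a lattice of the form $\Z+(\tau/p)\Z$ modulo rescaling. I will then track how the uniformizing coordinate $e^{2\pi i w}$ changes: the rescaling by $p$ needed to identify the resulting Tate curve with $\Tate(t^p)$, and to send the subgroup to $\langle t\rangle$, multiplies the canonical differential by $p$. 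In weight two, the value of $f$ on $(E,C,\omega)$ scales as $\omega^{-1}$, which gives
\[
 \Exp_0(f)(t)=\tfrac1p\,(f|S_0)(t) .
\]
I will fix this constant by testing against an explicit form, namely the Eisenstein series $E_2(z)-pE_2(pz)$. For this form, $a_0=1-p$ and both sides of~\eqref{eq:second-expansion} can be computed directly, which pins down both the sign and the factor $1/p$. The algebraic statement then follows because Tate expansion at the second branch of~\eqref{eq:subgroup-algebra} is compatible with the analytic $q$-expansion via the standard comparison over $\mathbb C$. This establishes~\eqref{eq:second-expansion}, including its constant term $-a_0/p$.
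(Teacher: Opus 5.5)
Your global strategy matches the paper's, and that part is correct: trace $f|W_p$ to level one over $I,S_0T^j$, use $W_pS_0=-\mathrm{diag}(1,p)$ and $M_2(\mathrm{SL}_2(\Z))=0$, and conclude $f|W_p=-U_pf$. The gap is in how you track the factor $p$ afterwards. Two of your identities are false, and they only happen to cancel.

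First, your ``direct check'' is off by $p$. Since $\det W_p=p$,
\[
 (f|W_p)(z/p)=p\,z^{-2}f(-1/z)=p\,(f|S_0)(z).
\]
So in $t=e^{2\pi i z/p}$ one has $f|S_0=-\frac1p\sum_r a_{pr}t^r$, not $-\sum_r a_{pr}t^r$.

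Second, the chart comparison introduces no extra $1/p$. Multiplication by $z$ carries $\mathbb C/(\Z+\Z(-1/z))$ with $\langle 1/p\rangle$ isomorphically onto $\mathbb C/(\Z+z\Z)\cong\Tate(t^p)$, with $\langle z/p\rangle\leftrightarrow\langle t\rangle$. Transporting $d\log$ then produces only the automorphy factor $z^{-2}$, so $\Exp_0(f)(t)=(f|S_0)(z)$ exactly. This is the paper's identity $\Exp_0(f)(t)=(f|S_0)(pz)$, written in your variable. A rescaling of the differential by $p$ would arise only if you passed to the isogenous curve $\Tate(t)=\Tate(t^p)/\langle t\rangle$ or used the isogeny $z\mapsto z^p$. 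Neither is the chart of~\eqref{eq:tate-branches}. Also, weight two scales as $\omega^{-2}$, not $\omega^{-1}$. The correct chain is $\Exp_0(f)(t)=(f|S_0)(z)=p^{-1}(f|W_p)(z/p)=-p^{-1}\sum_r a_{pr}t^r$. The single $1/p$ comes from $\det W_p$, as in the paper, not from rescaling the differential.

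Your calibration step cannot repair this. Applying the quasi-modular law of $E_2$ to $g=E_2(z)-pE_2(pz)$ gives $g|S_0=E_2(z)-\frac1pE_2(z/p)$. Its constant term $(p-1)/p$ already equals $-a_0/p$. That contradicts your formula for $f|S_0$, and combined with your claimed $\Exp_0=\frac1p\,f|S_0$ it would give $-a_0/p^2$. More fundamentally, a test form can fix the constant relating $\Exp_0$ to $f|S_0$ only if you compute $\Exp_0(g)$ independently from the moduli definition on $(\Tate(t^p),\langle t\rangle,d\log)$. That independent computation is exactly the transport argument above. Replace your last two steps with that argument and the proof goes through.
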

\begin{proof}
For $t=\exp(2\pi iz)$, the chart
$(\Tate(t^p),\langle t\rangle,d\log)$ gives
\[
 \Exp_0(f)(t)=(f|S_0)(pz)=p^{-1}(f|W_p)(z).
\]
Indeed, multiplication by $pz$ identifies the complex elliptic curve
at $-1/(pz)$, equipped with its standard order-$p$ subgroup, with the
curve at $pz$ and subgroup generated by $z$. Under this identification,
transporting the elliptic differential gives $(pz)^{-2}f(-1/(pz))$,
which accounts for the displayed factor $p^{-1}$ in the
determinant-normalized $W_p$.

The left cosets of $\Gamma_0(p)$ in $\mathrm{SL}_2(\Z)$ have
representatives $I,S_0T^j$, $0\le j<p$. Since
$W_pS_0T^j=-\left(\begin{smallmatrix}1&j\\0&p\end{smallmatrix}\right)$,
the trace of $f|W_p$ is
\[
 f|W_p+\frac1p\sum_{j=0}^{p-1}f((z+j)/p)=f|W_p+U_pf,
 \qquad U_pf=\sum_{r\ge0}a_{pr}q^r.
\]
The root-of-unity average selects precisely the coefficients indexed
by multiples of $p$, including the constant coefficient. The trace,
being a holomorphic weight-two form at level one, therefore vanishes:
its differential on $X(1)=\mathbb P^1$ has at most a simple pole at
the sole cusp. The residue theorem removes this possible pole, and
no regular differentials remain. Invariant differentials descend regularly at
elliptic points, which therefore contribute no additional poles.
Thus $f|W_p=-U_pf$, proving~\eqref{eq:second-expansion}.
\end{proof}
These hypotheses are essential: we apply the trace to a holomorphic
weight-two form before multiplying by the possibly weakly holomorphic
form $h$.

\subsection{One simultaneous congruence}
Normalize $E_2=1-24\sum_{r\ge1}\sigma_1(r)q^r$. For a prime $p$, put
\[
 \delta_p(q)=\sum_{r\ge1}\sigma'_p(r)q^r,
 \qquad \sigma'_p(r)=\sum_{\substack{a\mid r\\p\nmid a}}a.
\]
\begin{lemma}[Mazur's half-constant witness]\label{lem:Mazur}
For every odd prime $p$ there is a holomorphic
$G_p\in M_2(\Gamma_0(p);\Q)$ with
\[
 \Exp_\infty(G_p)=\frac12+A_p(q),\qquad A_p(q)\in\Z[[q]].
\]
Writing $F_p=2G_p$, the same form satisfies
\begin{equation}\label{eq:prime-congruences}
 \Exp_\infty(F_p)\in1+2\Z[[q]],\qquad
 \Exp_0(F_p)\in1+2\Z[1/p][[t]].
\end{equation}
\end{lemma}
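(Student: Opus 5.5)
The plan has two parts. First I would produce the form $G_p$ at the standard cusp, starting from the weight-two Eisenstein series at level $p$ and appealing to Mazur where the Eisenstein series alone does not suffice. Then I would deduce the second congruence in \eqref{eq:prime-congruences} purely formally from Lemma~\ref{lem:prime-trace}.

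\emph{Step 1 (the Eisenstein series).} Put $E^{(p)}(z)=E_2(z)-pE_2(pz)$. This is a holomorphic element of $M_2(\Gamma_0(p);\Q)$, because the non-holomorphic corrections of the two terms cancel. Its expansion is
\[
 \Exp_\infty(E^{(p)})=(1-p)-24\sum_{r\ge1}\bigl(\sigma_1(r)-p\,\sigma_1(r/p)\bigr)q^r=(1-p)-24\,\delta_p(q).
\]
Here I use the identity $\sigma_1(r)-p\sigma_1(r/p)=\sigma_p'(r)$, which holds because the divisors of $r$ divisible by $p$ contribute exactly $p\sigma_1(r/p)$. Hence $-E^{(p)}/24=\tfrac{p-1}{24}+\delta_p$ has integral non-constant coefficients.

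Write $\tfrac{p-1}{24}=n/d$ in lowest terms. Since $p$ is odd, $d\mid 12$. If $d$ is even, which happens exactly when $p\not\equiv1\pmod 8$, then $n$ is odd. In that case there is an integer $u$ with $un/d\equiv\tfrac12\pmod{\Z}$. The form $G_p=-uE^{(p)}/24$, shifted by an integer constant, does not work directly, since a constant is not modular. Instead I would absorb the integer discrepancy $un/d-\tfrac12\in\Z$ as an integer multiple of $-E^{(p)}/24\cdot d$: that multiple has integral constant term $n$ and integral higher coefficients. Choosing $u$ appropriately modulo $d$ then gives $\Exp_\infty(G_p)=\tfrac12+A_p$ with $A_p\in\Z[[q]]$.

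\emph{Step 2 (the case $p\equiv1\pmod 8$).} This is the main obstacle. Here $\tfrac{p-1}{24}$ has odd denominator, so no rational multiple of $E^{(p)}$ has integral higher coefficients and constant term $\tfrac12$. I would invoke Mazur's Eisenstein-ideal congruence \cite[II, Sections~5--6]{Mazur}. The $2$-primary part of $n$ is nontrivial, so there is a cusp form $g\in S_2(\Gamma_0(p);\Z)$ congruent to $\tfrac{p-1}{24}+\delta_p$ modulo $2^{v_2(n)}$ in the higher coefficients. A suitable $\Z$-combination of $-E^{(p)}/24$ and $g$, divided by a power of $2$, then has constant term with exactly one factor of $2$ in its denominator and integral higher coefficients. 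I would first try to quote Mazur's statement directly, since the lemma is attributed to him, and check only the normalization of the constant term.

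\emph{Step 3 (the congruences).} Set $F_p=2G_p$. Then $\Exp_\infty(F_p)=1+2A_p\in1+2\Z[[q]]$ immediately. Since $G_p$ is holomorphic at both cusps, Lemma~\ref{lem:prime-trace} applies to $F_p$, whose coefficients are $a_0=1$ and $a_r\in2\Z$ for $r\ge1$. It gives
\[
 \Exp_0(F_p)=-\frac1p\Bigl(1+\sum_{r\ge1}a_{pr}t^r\Bigr).
\]
The terms with $r\ge1$ lie in $2\Z[1/p][[t]]$. For the constant term, $-\tfrac1p-1=-\tfrac{p+1}{p}\in2\Z[1/p]$ because $p$ is odd. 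Hence $\Exp_0(F_p)\in1+2\Z[1/p][[t]]$, which completes \eqref{eq:prime-congruences}.
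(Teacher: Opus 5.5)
\textbf{There is a genuine gap in Step~2,} the one case that actually needs Mazur: the congruence you invoke is one factor of $2$ too weak.

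With $n$ the numerator of $\frac{p-1}{24}$ from Step~1, for $p\equiv1\pmod 8$ you have $v_2(n)=v_2(p-1)-3$. So your claim that the $2$-primary part of $n$ is nontrivial is false whenever $p\equiv9\pmod{16}$ (e.g.\ $p=41$, where $n=5$).

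More importantly, a congruence modulo $2^{v_2(n)}$ can never produce a half-integral constant term. Suppose $\lambda(\frac{p-1}{24}+\delta_p)+g'$, with $g'$ a rational cusp form, has constant term in $\frac12+\Z$. Then $v_2(\lambda)=-v_2(n)-1$. Integrality of the higher coefficients then forces a $2$-integral cusp form congruent to $\delta_p$ modulo $2^{v_2(n)+1}$, which is one factor of $2$ more than you invoke. Dividing your combination by $2^{v_2(n)}$ only gives a constant term that is a $2$-adic unit. For instance, at $p=17$ you have $n=2$, but what is needed is $\delta_{17}\equiv g\pmod 4$ for the level-$17$ newform $g$. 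Your argument never supplies this.

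\textbf{The missing input.} Mazur's congruence holds modulo the numerator $n_p$ of $\frac{p-1}{12}$, not of $\frac{p-1}{24}$, and it concerns the non-constant part alone: $\delta_p-g_p=n_pb_p$ with $g_p\in S_2(\Gamma_0(p);\Z)$ and $b_p\in q\Z[[q]]$. For $p\equiv1\pmod8$, $v_2(n_p)=v_2(p-1)-2$ exceeds $v_2(\frac{p-1}{24})$ by exactly one. That single factor is the half in the constant term. The paper then takes $F_p=(pE_2(p\tau)-E_2(\tau)-24g_p)/(p-1)=1+\frac{24}{d_p}b_p$, with $d_p=\gcd(p-1,12)$, uniformly for all $p\ge5$, and $F_3=1+12\delta_3$.

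\textbf{The rest of your argument.} Your Step~1 is a valid and more elementary alternative for $p\not\equiv1\pmod8$; at $p=3$ it reproduces the paper's $F_3$. Step~3 is the paper's argument via Lemma~\ref{lem:prime-trace}. Two small corrections there:
\begin{enumerate}[label=\textup{(\alph*)}]
\item Step~1 gives $F_p$ an odd constant term $a_0=2un/d$, generally not $1$. For $p=11$, $u=6$ gives $a_0=5$. Step~3 still works, because $-a_0/p-1=-(a_0+p)/p\in2\Z[1/p]$ for every odd $a_0$.
\item The detour about absorbing the integer discrepancy is unnecessary, since $A_p$ may have an integer constant term.
\end{enumerate}
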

\begin{proof}
First take $p\ge5$, and set
$d_p=\gcd(p-1,12)$ and $n_p=(p-1)/d_p$.
Mazur's congruence states that $\delta_p$ modulo $n_p$ is parabolic
\cite[II, Proposition~(5.12)(iii), pp.~85--87]{Mazur}. Here parabolic
has a specific lattice meaning: his $B^0(\Z/n_p)$ is
$B^0(\Z)\otimes\Z/n_p$, the quotient of the characteristic-zero
integral cusp-form lattice
\cite[II, Section~4(1), pp.~69--70, and (4.9), p.~77]{Mazur}.
Consequently, there is an actual integral rational cusp form $g_p$
satisfying
\[
 \delta_p-g_p=n_pb_p,\qquad b_p\in q\Z[[q]].
\]
This step uses the quotient of that lattice and makes no lifting
claim for arbitrary characteristic-two modular forms. If $n_p=1$,
take $g_p=0$.

Define
\begin{equation}\label{eq:prime-form}
 \mathcal E_p=pE_2(p\tau)-E_2(\tau),\qquad
 F_p=\frac{\mathcal E_p-24g_p}{p-1},\qquad G_p=\frac12F_p.
\end{equation}
The transformation anomaly cancels in $\mathcal E_p$, and
$\mathcal E_p|W_p=-\mathcal E_p$, so this form is holomorphic at
both cusps. Moreover, $\mathcal E_p=(p-1)+24\delta_p$, which gives
\begin{align}
 f_\infty(q):=\Exp_\infty(F_p)
   &=1+\frac{24}{d_p}b_p(q),\label{eq:prime-infinity}\\
 f_0(t):=\Exp_0(F_p)
   &=-\frac1p-\frac{24}{pd_p}(U_pb_p)(t).
   \label{eq:prime-zero}
\end{align}
The second equality follows from Lemma~\ref{lem:prime-trace}.
Since $d_p\mid12$, the nonconstant coefficients are even over
$\Z[1/p]$, and $-1/p-1=-(p+1)/p$ is also even there. Although
the second constant is $-1/p$ rather than $1$, its denominator
is allowed in the prescribed lattice.

For $p=3$, which lies outside the range of the cited Mazur result,
take directly
\begin{equation}\label{eq:level-three-form}
 F_3=\frac{3E_2(3\tau)-E_2(\tau)}2=1+12\delta_3(q).
\end{equation}
Since $\sigma'_3(3r)=\sigma'_3(r)$, Lemma~\ref{lem:prime-trace} gives
$\Exp_0(F_3)=-1/3-4\delta_3(t)$. This proves both congruences
in~\eqref{eq:prime-congruences} and completes the construction.
\end{proof}

\begin{proposition}[Prime-level global parity]\label{prop:global-parity}
Let $p$ be odd and let $G_p$ be the form just constructed. At every
geometric cusp $c$ of $X_0(p)$, the expansion
\[
 \Exp_c(G_p)-\frac12
\]
is integral over $\Z[1/p]$ in the completed cusp ring with its Tate
differential convention.
\end{proposition}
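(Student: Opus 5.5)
The plan is to reduce the statement to the two explicit expansions already computed in Lemma~\ref{lem:Mazur}, after checking that the two charts of Lemma~\ref{lem:prime-cusps} cover every geometric cusp with the correct differential convention. By Lemma~\ref{lem:prime-cusps}, the full punctured cusp neighborhood of $X_0(p)$ over $\Z[1/p]$ has exactly two components. The first is the standard branch $(\Tate(q),\mu_p)$ with ring $S((q))$. The second is the branch $(\Tate(t^p),\langle t\rangle)$ with ring $S((t))$. Both carry the invariant differential $d\log(z)$. The algebra $B_p$ has rank $p+1$, which matches the count of cyclic order-$p$ subgroups, so no cusp is omitted. Over a geometric point the cusps of $X_0(p)$ are $\infty$ and $0$, of widths $1$ and $p$, and these correspond to the two branches.

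It therefore suffices to verify integrality of $\Exp_c(G_p)-\tfrac12$ in $S[[q]]$ and in $S[[t]]$ separately.

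At $\infty$, Lemma~\ref{lem:Mazur} gives $\Exp_\infty(G_p)-\tfrac12=A_p(q)\in\Z[[q]]\subset S[[q]]$.

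At the second cusp, I use $\Exp_0(G_p)=\tfrac12 f_0(t)$, where $f_0$ is given by \eqref{eq:prime-zero} for $p\ge5$ and by $-1/3-4\delta_3(t)$ for $p=3$. This gives
\[
 \Exp_0(G_p)-\tfrac12=-\frac{p+1}{2p}-\frac{12}{pd_p}(U_pb_p)(t).
\]
Each term is integral:
\begin{itemize}
\item $p$ is odd, so $(p+1)/2\in\Z$.
\item $d_p\mid 12$, so $12/d_p\in\Z$.
\item $U_pb_p\in\Z[[t]]$, because $b_p\in q\Z[[q]]$.
\item The factor $1/p$ lies in $S$.
\end{itemize}
For $p=3$ the difference is $-2/3-2\delta_3(t)$, which is integral as well.

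The one point needing care is that Lemma~\ref{lem:prime-trace} applies. It requires $G_p$ to be holomorphic at both cusps. For $\mathcal E_p$ this holds because $\mathcal E_p|W_p=-\mathcal E_p$. For $g_p$ it holds because $g_p$ is a cusp form.

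I expect the main obstacle to be the identification of ``every geometric cusp'' with the two charts, including the normalization of the differential at the width-$p$ cusp. The expansion $\Exp_0$ must be taken in the coordinate $t$ with $d\log$ on $\Tate(t^p)$. It must not be taken in a $q^{1/p}$-coordinate with a rescaled differential, since that would change the constant term by a power of $p$. The factor $p^{-1}$ from the determinant-normalized $W_p$ in Lemma~\ref{lem:prime-trace} is exactly this normalization.

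I will also remark that, after base change to $\Z[1/p,\zeta_p]$ or to a geometric point, the second branch stays a single cusp, because it is totally ramified over the level-one cusp. Base change preserves integrality in $S[[t]]$, since $S[[t]]\to S'[[t]]$ is faithfully flat and Lemma~\ref{lem:ff-integral} applies. Hence the statement at geometric cusps follows from the two computations above.
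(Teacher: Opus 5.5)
Your proposal is correct and follows the paper's own argument. You use Lemma~\ref{lem:prime-cusps} to reduce to the two branches $S[[q]]$ and $S[[t]]$, read off the integrality of $\Exp_c(G_p)-\tfrac12=\tfrac12(\Exp_c(F_p)-1)$ on each branch from the congruences~\eqref{eq:prime-congruences} (which you unpack via \eqref{eq:prime-infinity}, \eqref{eq:prime-zero} and the $p=3$ formula), and then extend coefficients to geometric cusp charts. One small point: extending coefficients preserves integrality trivially, so you do not need Lemma~\ref{lem:ff-integral} there, since that lemma concerns the converse direction, namely the descent of integrality.
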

\begin{proof}
Lemma~\ref{lem:prime-cusps} identifies the two branches of the full
prime cusp neighborhood, and~\eqref{eq:prime-congruences} proves the
assertion on each. The asserted integrality persists under extension
of their coefficient rings to geometric cusp charts.
\end{proof}

\begin{theorem}[Simultaneous half-constant witness]\label{thm:simultaneous}
For every odd $N>1$, there is one holomorphic
$G_N\in M_2(\Gamma_0(N);\Q)$ such that
$\Exp_c(G_N)-1/2$ is integral at every geometric cusp $c$ of $X_0(N)$.
\end{theorem}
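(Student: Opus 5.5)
Choose an odd prime $p\mid N$ (one exists because $N>1$ is odd), and let $G_N$ be the pullback of the form $G_p$ from Lemma~\ref{lem:Mazur} along the level-forgetting map $X_0(N)\to X_0(p)$. Because $p\mid N$, we have $\Gamma_0(N)\subset\Gamma_0(p)$, so $G_p$ is automatically an element of $M_2(\Gamma_0(N);\Q)$. It is still holomorphic at every cusp, since holomorphy at a cusp of $X_0(N)$ reduces to holomorphy at the image cusp of $X_0(p)$. The claim therefore reduces to comparing, at each geometric cusp $c$ of $X_0(N)$, the expansion $\Exp_c(G_N)$ with $\Exp_{\bar c}(G_p)$, where $\bar c$ is the image cusp.

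The comparison rests on the naturality of Tate expansion with the invariant differential $d\log$. A geometric cusp $c$ of $X_0(N)$ corresponds to a Tate curve $\Tate(s^{w})$ over a punctured chart $S'((s))$. Here $S'$ is a finite extension of $\Z[1/N]$, obtained by adjoining roots of unity, and $w$ is the width of $c$. The chart carries a cyclic subgroup $C$ of order $N$. Its $p$-primary part $C[p]$ is cyclic of order $p$ in $\Tate(s^w)$, and the same curve with subgroup $C[p]$ and differential $d\log$ is the Tate datum at $\bar c$. That datum lies over $S'((s))$, with the parameter of $\bar c$ pulling back to a unit times a power of $s$, as in the cofinality remark in the proof of Proposition~\ref{prop:full-cusp-map}. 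Since the expansion is just the evaluation of the modular form on a triple (curve, subgroup, differential), we get $\Exp_c(G_N)=\Exp_{\bar c}(G_p)$ after substituting that parameter.

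By Proposition~\ref{prop:global-parity}, $\Exp_{\bar c}(G_p)-\tfrac12$ is integral over $\Z[1/p]$ in the completed cusp ring of $\bar c$. Integrality is preserved under the coefficient extension $\Z[1/p]\to S'$, which only adjoins $1/N$ and roots of unity. It is also preserved under substituting the parameter, because a unit times a positive power of $s$ sends integral power series to integral series. This gives integrality of $\Exp_c(G_N)-\tfrac12$ at every geometric cusp.

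The main obstacle is making the chart identification honest: we must confirm that the pulled-back parameter is an integral unit multiple of a power of $s$ over $S'$, not merely over its fraction field. Only then does the substitution map the completed ring at $\bar c$ into the completed ring at $c$. I would first try to verify this directly on $B_p$-type algebras as in Lemma~\ref{lem:prime-cusps}: the image of $C$ in $\Tate(s^w)[p]$ is either $\mu_p$ or generated by a $p$-th root of a power of $s$ times a root of unity. In either case the $\bar c$-parameter ($q$ or $t$) is explicitly a monomial in $s$ with a root-of-unity coefficient.
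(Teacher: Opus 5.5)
Your proposal is correct and is essentially the paper's proof: you pull $G_p$ back along $(E,C_N)\mapsto(E,C_N[p])$, observe that the curve and the $d\log$ differential are unchanged, and use that the coarser cusp parameter becomes a unit times a positive power of the finer one, so substitution (after extending coefficients from $\Z[1/p]$) preserves integrality and the congruence to $1/2$. Two small remarks: $C[p]$ is the $p$-torsion subgroup, not the $p$-primary part (these differ when $p^2\mid N$); and since the expansions of $G_p$ are power series, the obstacle you flag only needs the pulled-back parameter to lie in $sS'[[s]]$, which already follows from the map of completed cusp charts.
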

\begin{proof}
Choose an odd prime $p\mid N$ and pull $G_p$ back by
$(E,C_N)\mapsto(E,C_N[p])$. This map leaves the elliptic curve and
its Tate differential unchanged. On completed cusp charts, the
parameter pulls back to a unit times a positive power of the finer
parameter. Substitution therefore preserves both integrality and the
congruence to $1/2$, after extending coefficients from $\Z[1/p]$
to $\Z[1/N]$.
\end{proof}

\subsection{The equality in global homotopy}
\begin{theorem}[Joint annihilation of half-integral level-one classes]\label{thm:joint-kill}
For every $N>1$, every $K\in\Z$, and every
$h\in M_{4K}^!(\mathrm{SL}_2(\Z);\Z)$,
\[
 \rho_{N,K}^\partial\!\left(\left[\frac h2\right]\right)=0
 \quad\text{in }\cV_{N,K}^\partial.
\]
\end{theorem}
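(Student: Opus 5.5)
The plan is to split by the parity of $N$. In each case I write the class $[h/2]$, transported to $\cV_{N,K}^\partial$, as a rational global source class plus an integral class of $\pi_{8K+4}\cK_N^\partial$. Under $\rho_{N,K}^\partial$ and the forward comparison~\eqref{eq:old-new-comparison}, the class $[h/2]$ goes to the element of $\pi_{8K+4}\cK_N^\partial\otimes\Q$ whose coordinate at every cusp $c$ is $\tfrac12 h_c\,e_K$, where $h_c=\Exp_c(h)$. Since $h$ has level one, each $h_c$ is simply the $q$-expansion of $h$ written in the local cusp parameter. So it suffices to exhibit one rational global class $s\in\pi_{8K+4}E_N\otimes\Q$ such that $\tfrac12 h_c e_K-(\Phi_N^\partial)_*(s)_c$ is the coordinate of an actual integral class of $\pi_{8K+4}\cK_N^\partial$.

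\textbf{Odd prime level.} Let $N=p$ be an odd prime. Take $f=G_p h\in M^!_{4K+2}(\Gamma_0(p);\Q)$, where $G_p$ comes from Lemma~\ref{lem:Mazur}. By Lemma~\ref{lem:rational-source}, $f$ is an actual rational global class $s$, and by~\eqref{eq:half-normalization} its coordinate in each real factor is $\tfrac12\Exp_c(G_ph)e_K=\tfrac14\Exp_c(F_p)h_c\,e_K$. This normalization gives an extra factor of $\tfrac12$. To compensate, I will use $s=\Phi$-image of $F_ph=2G_ph$, whose coordinate is $\tfrac12\Exp_c(F_p)h_c\,e_K$.

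The difference $\tfrac12 h_ce_K-\tfrac12\Exp_c(F_p)h_ce_K=\tfrac12(1-\Exp_c(F_p))h_c\,e_K$ is exactly the integral witness series named in the introduction. By~\eqref{eq:prime-congruences}, $1-\Exp_c(F_p)\in 2S((\cdot))$ on both branches. Because $h$ has integral coefficients, $(1-\Exp_c(F_p))h_c/2\in S((q))$ and $S((t))$ respectively. By Lemma~\ref{lem:prime-cusps} and~\eqref{eq:real-lattice}, the pair then defines an honest element of $\pi_{8K+4}(R_S(q)\times R_S(t))=\pi_{8K+4}\cK_p^\partial$. This closes the argument at prime level.

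Two points need checking here. First, the weakly holomorphic $h$ gives series with finite lower Laurent bound in the bounded numerator~\eqref{eq:bounded-numerator}. Second, the substitution $q\mapsto t^p$ used on the second branch is compatible with $h_0(t)=h(t^p)$ under the $d\log$ normalization; this holds because $h$ has level one and weight $4K$.

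\textbf{Odd composite level.} For odd composite $N$, choose a prime $p\mid N$ and use the level-forgetting map $E_p[1/N]\to E_N$ together with the compatible map of full cusp spectra from Proposition~\ref{prop:full-cusp-map}. The identity $\tfrac12h\,e_K=(\Phi_p^\partial)_*(s)+i$ already holds at level $p$, with $s$ rational and $i$ integral in global homotopy. Pushing this identity forward along the commutative square of spectra, after inverting $N$, preserves both properties. Naturality (Proposition~\ref{prop:naturality}) composed with $\rho_{p,K}^\partial$ then gives $\rho_{N,K}^\partial([h/2])=0$. I expect the main care to go into this transport step: one must verify that the level-$p$ witness is actually sent to the class $\tfrac12h_ce_K$ at every level-$N$ cusp. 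Since everything is the image of actual global classes under actual spectrum maps, this reduces to the functoriality of the Tate realization.

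\textbf{Even level.} If $N$ is even, then $2\in S_N^\times$, so $S_N((q))=\tfrac12S_N((q))$ in every real factor. Hence $\tfrac12h_ce_K$ is itself integral, because $\pi_{8K+4}\cK_N^\partial$ is a $\Z[1/2]$-module. The class is therefore already zero in the quotient, and no source correction is needed.
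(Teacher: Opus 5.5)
Your proposal is correct and follows the paper's proof essentially step for step. At an odd prime you pair the rational global class of $F_ph$ (rightly corrected from $G_ph$) with the integral pair $\frac12(1-\Exp_c(F_p))h_ce_K$ in $\pi_{8K+4}(R_S(q)\times R_S(t))$; for odd composite $N$ you transport both witnesses along the level-forgetting square; and for even $N$ you use that $2$ is invertible. The one point to state more carefully is the even case: integrality of $\frac12h_ce_K$ holds because it is half the image of the integral level-one class $he_K$ under an actual spectrum map, not because of a coordinatewise statement in real factors, which are not identified at general even level.
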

\begin{proof}
First suppose $N=p$ is an odd prime, put $S=\Z[1/p]$, and use the
two factors of Lemma~\ref{lem:prime-cusps}. The imported integral
series has expansions $h_\infty=h(q)$ and $h_0=h(t^p)$. Define
\begin{equation}\label{eq:integral-witness}
 z_p=\left(
    \frac{1-f_\infty(q)}2h(q)e_K,
    \frac{1-f_0(t)}2h(t^p)e_K
       \right)\in\pi_{8K+4}\cK_p^\partial.
\end{equation}
This is a class in actual homotopy: the two modules are
$S((q))e_K$ and $S((t))e_K$, and the congruences
\eqref{eq:prime-congruences} make both coefficients integral.
The expansion $h(t^p)$ is forced by the underlying curve $\Tate(t^p)$. Forgetting a subgroup of that same elliptic curve
introduces no isogeny or Adams-operation factor.

The single rational modular form $m_p=F_ph$ has weight $4K+2$ and
is holomorphic on the smooth level stack. Lemma~\ref{lem:rational-source}
supplies its rational global class
$x_p\in\pi_{8K+4}E_p\otimes\Q$, whose two coefficients are
$(f_\infty h_\infty/2)e_K$ and $(f_0h_0/2)e_K$ by
\eqref{eq:half-normalization}. Thus, writing $j_p$ for the
map from the inherited level-one cusp spectrum,
\begin{equation}\label{eq:global-witness-equality}
 j_{p*}\bigl((h/2)e_K\bigr)
       -(\Phi_p^\partial)_*(x_p)=z_p\otimes1
 \quad\text{in }\pi_{8K+4}\cK_p^\partial\otimes\Q.
\end{equation}
All terms lie in the bounded rationalizations of the two integral
Laurent modules. The injections of these rationalizations into the
rational coefficient series show that the displayed coefficient
identity is an equality in that numerator. Because the integral class
was constructed before this comparison, no integral descent obstruction
has been discarded.

Now let $N>1$ be any odd integer and choose $p\mid N$. The canonical
subgroup $C_N[p]$ gives the restriction square of actual spectra
\begin{equation}\label{eq:prime-to-composite}
 \begin{CD}
 E_p @>{\Phi_p^\partial}>>\cK_p^\partial\\
 @VVV @VVV\\
 E_N @>{\Phi_N^\partial}>>\cK_N^\partial.
 \end{CD}
\end{equation}
The square uses the same level-forgetting map on the smooth stack
and on its punctured completed charts, by
Proposition~\ref{prop:full-cusp-map}. In the lower row, additional
coefficient primes are inverted before completion. Transport $x_p$
and $z_p$ through this square. Equation~\eqref{eq:global-witness-equality}
then holds with their images $x_N$ and $z_N$, expressing the imported
half as an integral global class plus the image of one rational global
source class. Its class in~\eqref{eq:joint-value} is therefore zero.
Prime powers and general odd composites require no new integral-lifting
theorem.

Finally, if $N$ is even, the image of $h(q)e_K$ is integral in
$\pi_{8K+4}\cK_N^\partial$, where multiplication by $2$ is invertible.
Half that image is therefore already an integral global class; take
it as $z_N$ and take $x_N=0$. This proves the remaining case.

For negative $K$, the Bott class $\beta^K$ still exists and every
expansion has a finite negative Laurent tail. The zero form has zero
witnesses. The excluded level $N=1$ is not covered by either argument.
\end{proof}

For example, at $p=3$, $K=0$, $h=1$, the integral witness is
\[
 z_3=\bigl(-6\delta_3(q),\;2/3+2\delta_3(t)\bigr)e_0.
\]
Its constant coefficients are $0$ and $2/3$, both integral over
$\Z[1/3]$. Together with the rational class of $F_3$, it gives
\eqref{eq:global-witness-equality} in both real factors.

\section{Bunke--Naumann secondary invariants of products}\label{sec:products}

We apply the joint-annihilation theorem to ordinary multiplication in
$\TMF$, followed by the relative invariant $b_1$. Tachikawa's product
formula evaluates this invariant using the two primary Tate images
\cite[Proposition~1.1 and Section~1.3]{Tachikawa}.
We give its proof in the bounded rational target used here. The
$\eta$-product case also appears in
\cite[Section~3.1, equation~(3.5)]{TYYmod2}.

Put $L=\Z((q))$ and $L_\Q=L\otimes\Q$. In the real coordinate
$e_K=\alpha\beta^K$, the level-one target is
\begin{equation}\label{eq:bounded-product-target}
 \cV_{1,K}=\frac{L_\Q}
 {L+M_{4K+2}^{!}(\mathrm{SL}_2(\Z);\Q)}.
\end{equation}
Here a modular form denotes its Tate expansion. Such expansions lie
in $L_\Q$: a rational weakly holomorphic level-one form is a finite
rational polynomial in $c_4,c_6,\Delta^{-1}$, whose expansions are
integral. The factor $1/2$ in~\eqref{eq:half-normalization} leaves
the rational source subspace unchanged. We retain $L_\Q$ throughout;
there is no identification with unrestricted $\Q((q))$.

\begin{lemma}[The real half-value]\label{lem:KO-half}
Let $C_{\KO}=\operatorname{cofib}(\KO\to\KO_\Q)$, and let
$\widehat\eta\in\pi_2C_{\KO}$ be the unique class whose Bockstein
is $\eta\in\pi_1\KO$. Then
\[
 \widehat\eta\,\eta^2=\frac12\alpha
 \quad\text{in }\pi_4C_{\KO}\cong(\Q/\Z)\alpha.
\]
\end{lemma}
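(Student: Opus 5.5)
The plan is to compute everything inside the cofiber sequence $\KO\to\KO_\Q\to C_{\KO}$. We use that $\pi_*C_{\KO}$ is a module over $\pi_*\KO$ and that the boundary map $\partial:\Sigma^{-1}C_{\KO}\to\KO$ is $\KO$-linear up to the sign of Lemma~\ref{lem:unit}.

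\emph{Identifying the groups.} First compute $\pi_2C_{\KO}$ and $\pi_4C_{\KO}$ from the long exact sequence. Since $\pi_2\KO_\Q=0$ and $\pi_1\KO=\Z/2\{\eta\}$, the Bockstein identifies $\pi_2C_{\KO}$ with $\Z/2$. Hence $\widehat\eta$ exists and is unique. Since $\pi_3\KO=0$ and $\pi_4\KO=\Z\alpha$, we get $\pi_4C_{\KO}\cong\Q\alpha/\Z\alpha$, and the Bockstein there is zero. Next note $2\widehat\eta=0$, so $2\,\widehat\eta\eta^2=0$. Therefore $\widehat\eta\eta^2$ lies in the $2$-torsion $\{0,\tfrac12\alpha\}$, and it suffices to show it is nonzero.

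\emph{Nonvanishing.} I would detect this class after complexification, or more robustly by a Toda-bracket argument. The cleanest route is the bracket: since $\widehat\eta$ is a lift through the cofiber of $\KO\to\KO_\Q$, the product $\widehat\eta\cdot\eta^2$ is represented by the image in $C_{\KO}$ of the Toda bracket $\langle 2,\eta,\eta^2\rangle\subset\pi_4\KO$, divided by $2$. Concretely, $\widehat\eta$ is $\tfrac12$ times a nullhomotopy of $2\eta$. Its product with $\eta^2$ is therefore $\tfrac12 y$, where $y\in\langle 2,\eta,\eta^2\rangle$, taken modulo $2\pi_4\KO+\pi_2\KO\cdot\eta^2$, which equals $2\Z\alpha$ since $\eta^4=0$ and $\pi_2\KO\eta^2=\eta^4\Z=0$. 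The classical fact is that $\langle 2,\eta,\eta^2\rangle=\alpha$ modulo $2\alpha$ in $\pi_4\KO$. This is the image of the sphere bracket, whose value $\langle 2,\eta,\eta^2\rangle$ hits a generator of $\pi_4\KO$ under the $J$-homomorphism analysis, equivalently via the relation $\eta^2\cdot\langle\eta,2,\eta\rangle$. It can be verified by complexification: $c(\alpha)=2u^2$, as in \eqref{eq:complexification}, and the bracket in $\KU$ maps to $u^2\cdot$(odd), using that $c(\eta)=0$ and the Wood cofiber sequence $\Sigma\KO\xrightarrow{\eta}\KO\xrightarrow{c}\KU$. Hence $\widehat\eta\eta^2=\tfrac12\alpha$.

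\emph{Expected obstacle.} The main obstacle is pinning down the bracket $\langle2,\eta,\eta^2\rangle=\alpha$ (mod $2$) with correct identification of indeterminacy and signs. I would first try the Wood sequence. In it, $\alpha$ is characterized as the class with $c(\alpha)=2u^2$, and the bracket is computed through the connecting map $\KU\to\Sigma^2\KO$ realification, which sends $u^2\mapsto\alpha$. Equivalently, I would check that $\widehat\eta\eta^2$ maps to the nonzero element $\tfrac12 c(\alpha)=u^2$ modulo $2u^2$ in $\pi_4C_{\KU}$ via complexification. Either route gives nonvanishing.
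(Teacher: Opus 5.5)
\textbf{There is a genuine gap: the nonvanishing of $\widehat\eta\,\eta^2$ is never actually proved.} Your first step is correct and agrees with the paper. You have $\pi_2C_{\KO}\cong\Z/2$ and $\pi_4C_{\KO}\cong(\Q/\Z)\alpha$, and $2\widehat\eta=0$ forces $\widehat\eta\,\eta^2\in\{0,\tfrac12\alpha\}$. So everything reduces to showing $\widehat\eta\,\eta^2\neq0$. That is the whole content of the lemma, and none of your arguments for it works.

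\emph{Complexification cannot detect this class.} The map $c\colon C_{\KO}\to C_{\KU}$ is $\KO$-linear, so $c(\widehat\eta\,\eta^2)=c(\widehat\eta)\,c(\eta)^2=0$. The candidate value is killed too: $c(\tfrac12\alpha)=u^2$, and this is \emph{zero} in $\pi_4C_{\KU}=(\Q/\Z)u^2$; there is no ``modulo $2u^2$'' in that group. In fact $\{0,\tfrac12\alpha\}$ is exactly $\ker(c)$ in degree $4$, so $c$ is blind to precisely the dichotomy you need to resolve.

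The bracket version fails the same way. Naturality only gives $c\langle2,\eta,\eta^2\rangle\subseteq\langle2,0,0\rangle=2\Z u^2$. This set contains both $c(\alpha)=2u^2$ and $c(0)=0$, and $2u^2$ is not ``$u^2\cdot$odd''.

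Your other justifications are also incorrect:
\begin{itemize}
\item The sphere bracket $\langle2,\eta,\eta^2\rangle$ is not defined, since $\eta^3=12\nu\neq0$ in $\pi_3\mathbb S$; and $\pi_4\mathbb S=0$ in any case.
\item The product $\eta^2\langle\eta,2,\eta\rangle$ lives in stem $5$, not stem $4$.
\item The Wood connecting map $\KU\to\Sigma^2\KO$ sends $u^2$ to $\eta^2\in\pi_2\KO$, not to $\alpha$. You are conflating it with realification, which does satisfy $r(u^2)=\alpha$.
\end{itemize}
The congruence $\langle2,\eta,\eta^2\rangle\equiv\alpha\pmod{2\alpha}$ in $\pi_4\KO$ is in fact true, but as written it is asserted, not proved.

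\textbf{Missing idea.} Use \emph{exactness} of the Wood sequence rather than \emph{detection} by $c$. Smashing $\Sigma\KO\xrightarrow{\eta}\KO\xrightarrow{c}\KU\to\Sigma^2\KO$ with $M(\Q/\Z)$ gives the exact sequence
\[
 \pi_3C_{\KO}\xrightarrow{\eta}\pi_4C_{\KO}\xrightarrow{c}\pi_4C_{\KU}
 \longrightarrow\pi_2C_{\KO}\xrightarrow{\eta}\pi_3C_{\KO}\longrightarrow\pi_3C_{\KU}=0,
\]
where $\pi_3C_{\KO}\cong\pi_2\KO\cong\Z/2$.
\begin{itemize}
\item In degree $4$, $c$ is multiplication by $2$ on $\Q/\Z$, so it is surjective with kernel $\{0,\tfrac12\alpha\}$.
\item Surjectivity of $c$ and $\pi_3C_{\KU}=0$ make $\eta\colon\pi_2C_{\KO}\to\pi_3C_{\KO}$ an isomorphism of groups of order two.
\item $\eta\colon\pi_3C_{\KO}\to\pi_4C_{\KO}$ has image $\ker(c)=\{0,\tfrac12\alpha\}$, so it is injective.
\end{itemize}
Applying these two injective maps to $\widehat\eta$ gives $\widehat\eta\,\eta^2=\tfrac12\alpha$. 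This is the paper's argument: the information sits in the kernel of complexification, and the Wood sequence identifies that kernel with the image of $\eta$.

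If you prefer to keep the bracket route, you need an honest computation. For example, determine $\pi_*(\KO\wedge M(\Z/2))$ in stems $2$--$4$ via the $A(1)$-based Adams spectral sequence, and show that $\eta^2\bar\eta$ is the image of $\alpha$. That is more work than the exact-sequence argument.
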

\begin{proof}
The coefficient exact sequence gives
\[
 \pi_2C_{\KO}=\Z/2,\qquad \pi_3C_{\KO}=\Z/2,
 \qquad \pi_4C_{\KO}=(\Q/\Z)\alpha.
\]
For $C_{\KU}=\operatorname{cofib}(\KU\to\KU_\Q)$, odd homotopy
vanishes and $\pi_4C_{\KU}=(\Q/\Z)u^2$.
Smash Wood's cofiber sequence
\[
 \Sigma\KO\xrightarrow{\eta}\KO\xrightarrow{c}\KU
       \longrightarrow\Sigma^2\KO
\]
with the Moore spectrum $M(\Q/\Z)$
\cite[Section~3.2, equation~(3.12)]{TYYmod2}.
Its exact sequence includes
\[
 \pi_3C_{\KO}\xrightarrow{\eta}\pi_4C_{\KO}
 \xrightarrow{c}\pi_4C_{\KU}\longrightarrow
 \pi_2C_{\KO}\xrightarrow{\eta}\pi_3C_{\KO}
 \longrightarrow0.
\]
By~\eqref{eq:complexification}, $c$ is multiplication by two on
$\Q/\Z$, hence is surjective. Therefore the right-hand
$\eta$-map is an isomorphism, and the left-hand one has image
$\ker(c)=\{0,\alpha/2\}$. Applying these two maps to
$\widehat\eta$ gives the asserted class. Any boundary sign is
immaterial in these groups of order two.
\end{proof}

\begin{proposition}[Product formula in the bounded target]\label{prop:product-formula}
Let $k,k'\in\Z$, $a\in\pi_{8k+1}\TMF$, and
$a'\in\pi_{8k'+2}\TMF$, and set $K=k+k'$.
Write their primary images as
\[
 (\Phi_1)_*a=f\eta\beta^k,\qquad
 (\Phi_1)_*a'=f'\eta^2\beta^{k'},\qquad
 f,f'\in\F_2((q)).
\]
For any integral Laurent lifts $\widetilde f,\widetilde f'\in L$,
\begin{equation}\label{eq:product-formula}
 b_1(aa')=\left[\frac{\widetilde f\widetilde f'}2\right]
          \quad\text{in }\cV_{1,K}.
\end{equation}
\end{proposition}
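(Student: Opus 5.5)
We must show $b_1(aa')=[\widetilde f\widetilde f'/2]$. The plan is to factor the cofiber lift of $aa'$ through a lift of $a'$ alone, and then evaluate the resulting product in the cofiber of the rationalization of the Tate target using Lemma~\ref{lem:KO-half}.

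\emph{Setup.} Work with the map of $\TMF$-modules $\Phi_1:\TMF\to\KO((q))$. Its cofiber $C=C_{\Phi_1}$ is a $\TMF$-module, and the boundary $\partial:\Sigma^{-1}C\to\TMF$ is $\TMF$-linear up to the sign of Lemma~\ref{lem:unit}. Since $(\Phi_1)_*(aa')=(\Phi_1)_*a\cdot(\Phi_1)_*a'=f f'\eta^3\beta^K=0$ (because $\eta^3=0$ in $\KO$), a lift exists. We construct it explicitly as follows. Choose a class $y\in\pi_{8k'+3}C$ lifting $a'$ modulo the image of $\KO((q))$. Since $(\Phi_1)_*a'\neq0$ in general, $a'$ itself does not lift. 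Instead, we use the standard Toda-bracket-type construction. Let $D=\operatorname{cofib}(\KO((q))\to\KO((q))_\Q)$, which is identified with $C_{\KO}((q))$ through the bounded rational numerator. Compare the cofiber sequence of $\Phi_1$ with that of the composite $\TMF\to\KO((q))\to\KO((q))_\Q$. Because $a$ and $a'$ are torsion, their images in $\KO((q))_\Q$ vanish, so each admits a Bockstein lift in the cofiber of the composite. That cofiber maps to $D$ via the octahedral axiom, and there the lifts are the unique classes $\widetilde f\,\widehat\eta\beta^k$ and the analogous lift of $\widetilde f'\eta^2\beta^{k'}$.

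\emph{Main computation.} The rational representative $z$ of a cofiber lift $\widetilde{aa'}$ is determined, modulo $\im\pi_{8K+4}\KO((q))$, by the image of $\widetilde{aa'}$ in $\pi_{8K+4}D=\pi_{8K+4}\KO((q))_\Q/\im$. By the octahedral comparison, this image equals the Bockstein lift of $(\Phi_1)_*a$ multiplied by the primary image $(\Phi_1)_*a'$. Explicitly, take a lift $\widetilde a$ of $a$ to the cofiber of $\TMF\to\KO((q))_\Q$ and multiply by $a'$. Its image in $D$ is $\widetilde f\widehat\eta\beta^k\cdot\widetilde f'\eta^2\beta^{k'}$, which Lemma~\ref{lem:KO-half} evaluates as $\tfrac12\widetilde f\widetilde f'\alpha\beta^K=\tfrac12\widetilde f\widetilde f'e_K$. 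Changing the lift $\widetilde a$ changes the rational representative by an element of $\Phi_1(\pi_{8K+4}\TMF\otimes\Q)=M^!_{4K+2}(\mathrm{SL}_2(\Z);\Q)$. Changing the integral lifts $\widetilde f,\widetilde f'$ changes $\widetilde f\widetilde f'/2$ by an element of $L$, since $2\cdot(\text{anything})/2$ and products of an even term with an integral series are integral. Both changes are therefore absorbed in the denominator of~\eqref{eq:bounded-product-target}.

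\emph{Main obstacle.} The delicate step is justifying that the cofiber lift obtained by multiplying a lift of $a$ (in the cofiber of the rationalized composite) by $a'$ computes $b_1(aa')$. This requires checking that the rationalized cofiber sequence used in Proposition~\ref{prop:relative} agrees with the composite construction. Concretely, the element $z\in\pi_{8K+4}\KO((q))\otimes\Q$ with $\widetilde x_\Q=z$ should be identified with the preimage in $\KO((q))_\Q$ of the $D$-class, up to $\TMF_\Q$-source and integral terms. I would verify this with the octahedral diagram for $\TMF\to\KO((q))\to\KO((q))_\Q$, using that $\pi_*\TMF_\Q\to\pi_*\KO((q))_\Q$ accounts exactly for the source indeterminacy. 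I would also check that $\pi_*\KO((q))_\Q$ is the bounded numerator $L_\Q$ in the relevant degrees, which follows from Lemma~\ref{lem:completed-coefficients}-type coefficient calculations. Signs are irrelevant because the final values lie in groups of exponent two modulo the indeterminacy.
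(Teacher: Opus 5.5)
Your strategy matches the paper's. The paper takes the unique Bockstein lift $\widehat a\in\pi_{8k+2}C_{\TMF}$, where $C_{\TMF}=\operatorname{cofib}(\TMF\to\TMF_\Q)$, multiplies it by $a'$, and pushes it along the coefficient map $C_{\TMF}\to D$. Here $A=\KO((q))$ and $D=\operatorname{cofib}(A\to A_\Q)$. You instead route the lift through $C_\psi=\operatorname{cofib}(\psi)$, with $\psi:\TMF\to A\to A_\Q$. Your evaluation via Lemma~\ref{lem:KO-half} and your independence of $\widetilde f,\widetilde f'$ are fine. The gap is the comparison you defer, and as formulated it is wrong. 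If $\widetilde{aa'}$ is a lift in $C_{\Phi_1}$, as in Proposition~\ref{prop:relative}, its image in $D$ is zero, because $C_{\Phi_1}\to C_\psi\to D$ are consecutive maps of the octahedral triangle. If it is an arbitrary lift in $C_\psi$, it is defined only up to $j(w)$ with $w\in\pi_{8K+4}A_\Q=L_\Q e_K$ arbitrary. Its image in $D$ then ranges over all of $(L_\Q/L)e_K$. So ``changing the lift changes the rational representative by a source element'' is not a general fact, and the $D$-image of a lift in $C_\psi$ does not by itself determine $b_1$.

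What is true, and what the paper proves with its loop comparison, is that lifts coming from $C_{\TMF}$ compute $-b_1$. Using functorial cofibers, let $\gamma:C_{\TMF}\to C_\psi$, $\alpha:C_{\Phi_1}\to C_\psi$, $\beta:C_\psi\to D$ and $\epsilon:C_\psi\to(C_{\Phi_1})_\Q=\operatorname{cofib}(\TMF_\Q\to A_\Q)$ be the induced maps. Then:
\begin{itemize}
\item $\epsilon\alpha$ is rationalization;
\item $\epsilon\gamma\simeq0$, since its square factors through $\mathrm{id}_{\TMF_\Q}$;
\item $\beta\alpha\simeq0$, since its square factors through $\mathrm{id}_A$;
\item $\beta\gamma$ is the coefficient map.
\end{itemize}
Let $x\in\pi_n\TMF$ be torsion with $(\Phi_1)_*x=0$. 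Let $\widetilde x\in\pi_{n+1}C_{\Phi_1}$ lift $x$ with rational representative $z$, and let $\widehat x$ be a Bockstein lift. Then $\gamma\widehat x-\alpha\widetilde x=j(w)$ for some $w\in\pi_{n+1}A_\Q$. Applying $\epsilon$ gives $z+w\in(\Phi_1)_*(\pi_{n+1}\TMF\otimes\Q)$. Applying $\beta$ shows that the $D$-image of $\widehat x$ is the class of $w$, which is $-b_1(x)$.

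Your specific lift does qualify, for a reason you do not state. Since $\pi_{8k+2}A_\Q=0$, the lift $\widetilde a$ is unique, so $\widetilde a=\gamma\widehat a$. By $\TMF$-linearity, $\widetilde a\,a'=\gamma(\widehat a\,a')$. Alternatively, any change $j(w)$ of $\widetilde a$ is killed by $a'$, which acts on $A_\Q$ through $\psi_*(a')=0$. With this inserted, your computation becomes exactly the paper's, and the sign is immaterial because the value has order two.
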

\begin{proof}
For a ring spectrum $R$, write
$C_R=\operatorname{cofib}(R\to R_\Q)=R\wedge M(\Q/\Z)$.
It is an $R$-module. A ring map $\varphi:R\to A$ induces an
$R$-linear map $C_R\to C_A$, with $R$ acting on $C_A$ through
$\varphi$.

We first relate this coefficient construction to
Proposition~\ref{prop:relative}. If $x\in\pi_nR$ is torsion and
$\pi_nA=0$, choose a Bockstein lift
$\widehat x\in\pi_{n+1}C_R$. Its image in
\[
 \pi_{n+1}C_A=
 (\pi_{n+1}A\otimes\Q)/\im\pi_{n+1}A
\]
represents $b_\varphi(x)$, up to the fixed overall boundary sign,
after quotienting by the rational source image. Indeed, a rational
nullhomotopy of $x$ supplies $\widehat x$. Its image in $A$ differs
from an integral nullhomotopy by the rational representative used
in the cofiber definition. Changing the rational nullhomotopy adds
the image of $\pi_{n+1}R_\Q$, and changing the integral one adds
$\im\pi_{n+1}A$. More explicitly, if $G$ is the rational nullhomotopy of $x$ and
$H$ the integral nullhomotopy of $\varphi x$, both directed toward
zero, the cofiber representative is the loop
$(\varphi G)^{-1}H_\Q$. The coefficient-cofiber image is the inverse
loop $H_\Q^{-1}(\varphi G)$. They therefore differ by the single
overall sign. This is the coefficient-sequence diagram chase of \cite[Section~4.3, Lemma~4.4]{BN}; it uses exact sequences only,
and applies to periodic spectra in every degree.

Now take $R=\TMF$, $A=\KO((q))$, and $\varphi=\Phi_1$.
Rational odd homotopy of $R$ vanishes, and
\[
 \pi_{8k+2}R_\Q=M_{4k+1}^{!}(\mathrm{SL}_2(\Z);\Q)=0,
\]
since $-I$ acts by $-1$ in odd weight.
Thus $a$ has a unique Bockstein lift
$\widehat a\in\pi_{8k+2}C_R$. Both rational groups of $A$ in
degrees $8k+1$ and $8k+2$ also vanish, so its image is
$\widetilde f\beta^k\widehat\eta\in\pi_{8k+2}C_A$.
Because $a'$ has even degree, the module Bockstein sends
$\widehat a\,a'$ to $aa'$. Its image under $C_R\to C_A$ is
\[
 \widetilde f\widetilde f'\beta^K
       (\widehat\eta\,\eta^2)
   =\frac{\widetilde f\widetilde f'}2 e_K
\]
by Lemma~\ref{lem:KO-half}, transported through the constant-series
map $\KO\to A$. The product $aa'$ is torsion and its primary
image is zero, since $\pi_{8K+3}A=0$. The coefficient description
therefore proves~\eqref{eq:product-formula}; its class is killed
by two, so the overall boundary sign makes no difference.

Every representative displayed above has common denominator two.
Replacing either integral lift by that lift plus twice an element
of $L$ changes the representative by an element of $L$. Products
of Laurent series with finite lower bounds again have finite lower
bounds. These observations prove independence of lifts and cover
all integers $k,k'$, using the Bott unit $\beta$ in $A$.
\end{proof}

\begin{corollary}[Bunke--Naumann secondary invariants of products]\label{cor:Tachikawa}
For all $k,k'\in\Z$, $a\in\pi_{8k+1}\TMF$,
$a'\in\pi_{8k'+2}\TMF$, and $N>1$,
\[
 b_N^\partial\bigl(\iota_N(aa')\bigr)=0.
\]
\end{corollary}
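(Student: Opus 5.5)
The plan is to combine three earlier results: the level-one product formula, the naturality of Corollary~\ref{cor:level-naturality}, and the joint annihilation of Theorem~\ref{thm:joint-kill}. Set $K=k+k'$. The product $aa'$ lies in $\pi_{8K+3}\TMF$. It is torsion, since rational odd homotopy of $\TMF$ vanishes. Its level-one primary image is zero because $\pi_{8K+3}\KO((q))=0$. Hence $b_1(aa')$ is defined, and Proposition~\ref{prop:product-formula} gives
\[
 b_1(aa')=\left[\frac{\widetilde f\widetilde f'}{2}\right]\in\cV_{1,K}
\]
for any integral Laurent lifts $\widetilde f,\widetilde f'\in L$ of the primary images.

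Next, I pass to level $N$. The class $aa'$ maps to a torsion class in $\pi_{8K+3}\TMF[1/N]$. Corollary~\ref{cor:level-naturality} then shows that $\iota_N(aa')$ lies in the primary kernel of $\Phi_N^\partial$, and that
\[
 b_N^\partial(\iota_N(aa'))=\rho_{N,K}^\partial\bigl(b_1(aa')\bigr)
 =\rho_{N,K}^\partial\left(\left[\frac{\widetilde f\widetilde f'}{2}\right]\right).
\]
Here $b_1(aa')$ is viewed in $\cV_{1,K}[1/N]$. So it remains to show that this imported half-class vanishes in $\cV_{N,K}^\partial$.

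The key input, and the main obstacle, is that Theorem~\ref{thm:joint-kill} applies only to halves of \emph{modular forms} $h\in M_{4K}^!(\mathrm{SL}_2(\Z);\Z)$, while $\widetilde f\widetilde f'$ is a priori an arbitrary integral Laurent series. I would invoke the published primary-image theorem cited in the introduction. It says that the mod $2$ reductions $f$ and $f'$ of the Tate images of classes in $\pi_{8k+1}\TMF$ and $\pi_{8k'+2}\TMF$ are reductions of integral weakly holomorphic level-one forms, of weights $4k$ and $4k'$ respectively. In the $\eta$-family this is the statement that $\eta$-multiples detected in $\KO((q))$ come from $\pi_{8k}\TMF$. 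I would therefore choose the lifts $\widetilde f=g$ and $\widetilde f'=g'$ with $g\in M_{4k}^!(\mathrm{SL}_2(\Z);\Z)$ and $g'\in M_{4k'}^!(\mathrm{SL}_2(\Z);\Z)$. This choice is allowed because Proposition~\ref{prop:product-formula} is independent of lifts. Then $h=gg'$ lies in $M_{4K}^!(\mathrm{SL}_2(\Z);\Z)$, using that integral forms multiply. If I could not cite the primary-image theorem in exactly this form, I would fall back on the fact that $\Phi_1$ factors through the Tate expansion of $\pi_*\TMF\to MF_*$ mod $2$. I would then check the needed weights, where $\eta\beta^k$ corresponds to weight $4k$, using $c_4,c_6,\Delta^{\pm1}$.

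Finally, Theorem~\ref{thm:joint-kill} applied with this $h$ gives $\rho_{N,K}^\partial([h/2])=0$ for every $N>1$ and every $K\in\Z$, which proves the corollary. For negative $k$ or $k'$, Bott periodicity and negative powers of $\Delta$ handle the Laurent tails, exactly as in the cited results. No sign issues arise, since every class involved has order at most two.
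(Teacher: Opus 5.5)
Your proposal is correct and is essentially the paper's proof: you apply Proposition~\ref{prop:product-formula} with the lifts chosen as integral weakly holomorphic forms $H,H'$ from the Tachikawa--Yamashita--Yonekura primary-image theorem, then use Corollary~\ref{cor:level-naturality} and Theorem~\ref{thm:joint-kill} with $h=HH'$. The paper adds only a few refinements: it applies the cited theorem, which is a degree-$8k+2$ statement, to $\eta a$ for the $8k+1$ factor, since multiplication by $\eta$ leaves $f$ unchanged. It also checks integrality and negative degrees from the theorem's explicit generators in $c_4,\Delta$ and from inverting $\Delta^{24}$. Your paraphrase via $\pi_{8k}\TMF$ and your fallback via $\pi_*\TMF\to MF_*$ are not needed, and as stated they would not handle odd-degree torsion.
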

\begin{proof}
The mathematical primary-image theorem of
Tachikawa--Yamashita--Yonekura
\cite[Section~4, Second statement, and Section~4.1]{TYYmod2}
supplies integral weakly holomorphic forms $H,H'$ of weights
$4k,4k'$ whose reductions are $f,f'$, respectively. For the
$8k+1$ case, apply the degree-$8k+2$ theorem to $\eta a$;
multiplication by $\eta$ sends $f\eta\beta^k$ to
$f\eta^2\beta^k$ without changing $f$.
The theorem is stated for periodic $\TMF$; its proof passes from
connective $\tmf$ by inverting the actual level-one periodicity
element $\Delta^{24}$, so negative degrees are included. Its mod-two generator images are
finite expressions in the integral forms $c_4$ and $\Delta$; their
coefficients can be lifted to $0$ or $1$. Inverting $\Delta^{24}$
preserves integral Laurent expansions. Thus the forms $H,H'$ have
integral coefficients, even though the source proof computes at
the prime two. No even-degree $\TMF$ lift of $H$ or $H'$ is required.

Set $h=HH'\in M_{4K}^{!}(\mathrm{SL}_2(\Z);\Z)$.
Proposition~\ref{prop:product-formula} gives $b_1(aa')=[h/2]$.
Corollary~\ref{cor:level-naturality} and
Theorem~\ref{thm:joint-kill} now give
\[
 b_N^\partial\bigl(\iota_N(aa')\bigr)
   =\rho_{N,K}^\partial([h/2])=0.
\]
This uses the mathematical primary-image result, without any
field-theoretic realization hypothesis or classification of $h$
as a single power of $\Delta$.
\end{proof}

\section{Torsion in the required stems of \texorpdfstring{$\TMF_0(3)$}{TMF0(3)}}

For the level-three calculation, we use the periodic spectrum denoted
$\TMF(\Gamma_0(3))$ by Mahowald--Rezk as our model of $\TMF_0(3)$.
Their Proposition~4.1 \cite{MR} gives the exact sequence
\begin{multline}\label{eq:MR}
0\longrightarrow
 \F_2[\Delta^{\pm2}]\{\nu,\nu^2,x,\eta x,\bar\kappa,x^2,\nu x^2\}
 \longrightarrow \pi_*\TMF_0(3)\\
 \longrightarrow
 bo_*[1/3,\Delta^{\pm1}]\{1,a_1a_3\}
 \oplus
 bsp_*[1/3,\Delta^{\pm1}]\{2a_3^2,2(a_1a_3)a_3^2\}
 \longrightarrow \Delta\F_2[\Delta^{\pm2}]\longrightarrow0.
\end{multline}
The classes appearing here have degrees
\[
 |\Delta|=24,\quad |a_1a_3|=8,\quad |a_3^2|=12,
 \quad |\eta|=1,\quad |\nu|=3,\quad |x|=17,
 \quad |\bar\kappa|=20.
\]
In this source exact sequence, $\Delta$ is the modular-form notation
used by Mahowald--Rezk. We denote their degree-$48$ spectral unit
$\Delta^2$ by $\delta$. The class $\nu\Delta^{2m}$ in their notation
is thus $\nu\delta^m$ here.

\begin{lemma}[Degree-three residue]\label{lem:MR-residue}
For every integer $m$,
\[
 \pi_{48m+3}\TMF_0(3)\cong\F_2\{\nu\delta^m\},
\]
and
\[
 \Tor\pi_n\TMF_0(3)=0
 \quad\text{for }n\equiv11,19,27,35,43\pmod{48}.
\]
\end{lemma}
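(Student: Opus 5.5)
The plan is to read both assertions directly off the Mahowald--Rezk exact sequence~\eqref{eq:MR} by degree bookkeeping. Fix a stem $n\equiv3\pmod8$. I will show that the third term of~\eqref{eq:MR} vanishes in degree $n$. Exactness then identifies $\pi_n\TMF_0(3)$ with the degree-$n$ part of the injective first term $\F_2[\Delta^{\pm2}]\{\nu,\nu^2,x,\eta x,\bar\kappa,x^2,\nu x^2\}$. In particular $\pi_n\TMF_0(3)$ is then an $\F_2$-vector space, hence entirely torsion. The last term $\Delta\F_2[\Delta^{\pm2}]$ is concentrated in degrees $\equiv24\pmod{48}$, which are even. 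It therefore plays no role in odd stems.

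For the vanishing of the third term, I use the standard homotopy of the connective spectra $bo$ and $bsp$.
\begin{itemize}
\item $bo_*$ is nonzero only in degrees $\equiv0,1,2,4\pmod8$.
\item $bsp_*$, which is $bo_*$ with the roles of $\Z$ and the $\eta$-torsion shifted by four, is nonzero only in degrees $\equiv0,4,5,6\pmod8$.
\end{itemize}
Inverting $3$ and adjoining $\Delta^{\pm1}$, of degree $24\equiv0\pmod8$, does not change these residues mod $8$. The generators have the following degrees and shifts.
\begin{itemize}
\item $1$ and $a_1a_3$ have degrees $0$ and $8$, so they contribute nothing in residue $3$.
\item $2a_3^2$ and $2(a_1a_3)a_3^2$ have degrees $12$ and $20$, both $\equiv4\pmod8$. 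They shift the $bsp$ residues to $\{4,0,1,2\}$, which again excludes $3$.
\end{itemize}
Hence the third term is zero in every degree $\equiv3\pmod8$.

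It remains to list the degrees of the $\F_2$ generators modulo $48$. The generators $\nu,\nu^2,x,\eta x,\bar\kappa,x^2,\nu x^2$ have degrees $3,6,17,18,20,34,37$. Their residues mod $8$ are $3,6,1,2,4,2,5$. Multiplication by $\Delta^{\pm2}$, which is $\delta^{\pm1}$ in the notation fixed above, shifts degrees by multiples of $48$.

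Consequently, in stems $48m+11$, $48m+19$, $48m+27$, $48m+35$ and $48m+43$ the first term is zero, and so is $\pi_n$, hence its torsion. In stem $48m+3$ exactly one basis element survives, namely $\nu\Delta^{2m}=\nu\delta^m$. This gives $\pi_{48m+3}\TMF_0(3)\cong\F_2\{\nu\delta^m\}$ for every $m\in\Z$, with negative $m$ covered because $\Delta^{\pm2}$ is inverted.

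The only delicate point is the input data rather than any argument. One must be sure of two things. First, the $bo_*$ and $bsp_*$ summands in~\eqref{eq:MR} mean the connective homotopy rings tensored with $\Z[1/3,\Delta^{\pm1}]$, so that residues mod $8$ are as stated. Second, the identification of the Mahowald--Rezk class $\nu\Delta^{2m}$ with $\nu\delta^m$ is as fixed before the lemma. I would double-check the $bsp$ residues against \cite[Proposition~4.1]{MR}, since an off-by-four error there is the only way torsion could appear in residue $3$.
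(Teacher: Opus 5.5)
Your proposal is correct and matches the paper's proof: you show that the $bo$ summands (shifts $0,8$) and the $bsp$ summands (shifts $\equiv4\pmod 8$) vanish in residue three, so exactness of~\eqref{eq:MR} identifies $\pi_n\TMF_0(3)$ for $n\equiv3\pmod8$ with the $\F_2$ left term, in which only the $\nu\delta^m$ family has that residue. Your added remark that the rightmost term is irrelevant in odd stems is slightly sharper than the paper's wording but changes nothing.
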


\begin{proof}
The seven generators in the left term of~\eqref{eq:MR} have degrees
modulo eight
\[
 3,6,1,2,4,2,5,
\]
respectively. Because multiplication by $\Delta^{2}$ preserves these
residues, only the $\nu$ family occurs in residue three.

The coefficient groups of $bo$ are nonzero only in residues
$0,1,2,4$ modulo eight. The shifts $1$ and $a_1a_3$ differ by eight,
so the $bo$ term vanishes in residue three. Periodic symplectic
$K$-theory has nonzero coefficients in residues $0,4,5,6$; since
both displayed $bsp$ shifts are four modulo eight, that term is
nonzero only in residues $0,1,2,4$. The rightmost term, which is
supported in degrees $24+48\Z$, lies in residue zero. Exactness
of~\eqref{eq:MR} therefore identifies the middle group in residue
three with the indicated left summand, without any extension ambiguity.
\end{proof}

\begin{corollary}[The primary kernel]\label{cor:primary-kernel}
Every nonzero torsion class in a stem $8K+3$ lies in the kernel of the full-cusp primary map.
More precisely,
\[
 \ker(\Phi_3^\partial)_*\cap\Tor\pi_{8K+3}\TMF_0(3)
 =\begin{cases}
 \Z/2\{\nu\delta^m\},&K=6m,\\
 0,&K\not\equiv0\pmod6.
 \end{cases}
\]
\end{corollary}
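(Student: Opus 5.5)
The plan has two parts: first identify the torsion in stems $8K+3$, then show that the full-cusp primary map kills it. Lemma~\ref{lem:MR-residue} settles the torsion. Write $8K+3=48m+3+8j$ with $0\le j\le5$. The residue $8K+3$ modulo $48$ is one of $3,11,19,27,35,43$, and $K\equiv0\pmod 6$ exactly when the residue is $3$. When it is $3$, the lemma gives $\Tor\pi_{8K+3}\TMF_0(3)=\pi_{8K+3}\TMF_0(3)=\Z/2\{\nu\delta^m\}$ with $m=K/6$. In the other five residues the torsion is zero, and the intersection with the kernel is then trivially zero. So the second case of the displayed formula needs nothing beyond the lemma.

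It remains to show that $\nu\delta^m$ lies in $\ker(\Phi_3^\partial)_*$. I would compute the target group directly. Lemma~\ref{lem:prime-cusps} gives
\[
 \pi_{8K+3}\cK_3^\partial\cong\pi_{8K+3}R_S(q)\oplus\pi_{8K+3}R_S(t),
\]
and Lemma~\ref{lem:completed-coefficients} identifies each summand with $(\pi_{8K+3}\KO[1/3])((\cdot))$. Since $\pi_{8K+3}\KO=0$, both summands vanish. So $\pi_{8K+3}\cK_3^\partial=0$, and every class in stem $8K+3$, torsion or not, is killed by $(\Phi_3^\partial)_*$. For $K=6m$ the intersection is therefore all of $\Z/2\{\nu\delta^m\}$. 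The same vanishing proves the first sentence of the corollary: any nonzero torsion class in stem $8K+3$ lies in the primary kernel.

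There is also a more structural route to membership, which does not need the identification of the whole target group. Since $\nu$ comes from $\pi_3$ of the sphere, $(\Phi_3^\partial)_*(\nu\delta^m)$ equals $\nu$ times the image of $\delta^m$ in the real cusp factors. This product lands in $\pi_{8K+3}$ of a product of completed $\KO$-algebras, which is zero. One could alternatively note that $\nu$ already maps to zero in $\pi_3\KO=0$. Either version is enough.

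I expect no real obstacle here. The one delicate point is that Lemma~\ref{lem:completed-coefficients} really applies in odd degree $8K+3$ for all $K\in\Z$, including negative $K$. This holds because the lemma was proved in every degree, without connectivity assumptions, using Bott periodicity.
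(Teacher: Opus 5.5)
Your proposal is correct and is essentially the paper's proof. The paper likewise deduces $\pi_{8K+3}\cK_3^\partial=0$ from Lemmas~\ref{lem:prime-cusps} and~\ref{lem:completed-coefficients}, since real $K$-theory vanishes in degree $3$ mod $8$, so every class in these stems lies in the primary kernel, and it then reads off the torsion from Lemma~\ref{lem:MR-residue}; your alternative argument via $\nu\mapsto 0\in\pi_3\KO$ is also valid but is not needed.
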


\begin{proof}
Lemmas~\ref{lem:prime-cusps} and~\ref{lem:completed-coefficients}
give $\pi_{8K+3}\cK_3^\partial=0$, because real $K$-theory vanishes
in that residue. Every source class in the indicated degrees thus
belongs to the primary kernel, and Lemma~\ref{lem:MR-residue}
computes the source groups.
\end{proof}

\section{Residues and the exact level-three secondary value}\label{sec:residue}

A secondary value can vanish only if one rational global modular
form removes its two cusp coefficients modulo the real integral
lattices. In weight two, the residue theorem gives a linear
constraint on those two expansions. This constraint detects the
value of $\nu$ directly.

Write $S=\Z[1/3]$ and use
$\cK_3^\partial\simeq R_S(q)\times R_S(t)$ with the elliptic
Tate differential $d\log z$. In degree four we suppress the common
real basis $e_0=\alpha$. Write $\operatorname{CT}$ for Laurent
constant coefficient.

\begin{lemma}[The two-cusp residue relation]\label{lem:residue-relation}
For every $g\in M_2^!(\Gamma_0(3);\Q)$,
\begin{equation}\label{eq:residue-relation}
 \operatorname{CT}\Exp_\infty(g)
       +3\operatorname{CT}\Exp_0(g)=0.
\end{equation}
Consequently the formula
\begin{equation}\label{eq:residue-detector}
 \ell:\cV_{3,0}^\partial\longrightarrow\Q/S,
 \qquad [(u,v)]\longmapsto
   [\operatorname{CT}(u)+3\operatorname{CT}(v)]
\end{equation}
defines a homomorphism.
\end{lemma}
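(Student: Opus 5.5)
The plan is to read \eqref{eq:residue-relation} as the residue theorem on the compact Riemann surface $X_0(3)$, applied to the meromorphic differential $g(z)\,dz$. Well-definedness of $\ell$ then follows from Lemma~\ref{lem:rational-source}, which identifies the rational source with Tate expansions of such forms.

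\emph{Step 1: the differential.} For $g\in M_2^!(\Gamma_0(3);\Q)$, the form $\omega_g=g(z)\,dz$ is $\Gamma_0(3)$-invariant and holomorphic on the upper half-plane, since weakly holomorphic forms have poles only at cusps. As noted in the proof of Lemma~\ref{lem:prime-trace}, invariant weight-two differentials descend regularly through elliptic points. Here there is a single elliptic point, of order three, and I would check regularity there with the local coordinate $(z-z_0)^3/(z-\bar z_0)^3$. Thus $\omega_g$ is a meromorphic differential on $X_0(3)$ whose poles lie only at the two cusps, and the sum of its residues is zero.

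\emph{Step 2: the residues at the cusps.} At $\infty$, put $q=e^{2\pi iz}$, so that $\omega_g=\Exp_\infty(g)(q)\,dq/(2\pi i q)$. The residue there is $\operatorname{CT}\Exp_\infty(g)/(2\pi i)$.

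At the cusp $0$, use the chart $(\Tate(t^3),\langle t\rangle)$ with $t=e^{2\pi i z}$ and $w=-1/(3z)$. The identification in the proof of Lemma~\ref{lem:prime-trace} gives $\Exp_0(g)(t)=(3z)^{-2}g(-1/(3z))$. That argument used only the modular transport of the differential, not holomorphy, so it applies to weakly holomorphic $g$. Since $dw=dz/(3z^2)$, we get
\[
 g(w)\,dw=3\,(3z)^{-2}g(-1/(3z))\,dz=3\,\Exp_0(g)(t)\,\frac{dt}{2\pi i\,t}.
\]
The map $z\mapsto w$ is holomorphic, so $t$ is an orientation-compatible local parameter, and the residue at $0$ is $3\operatorname{CT}\Exp_0(g)/(2\pi i)$. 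Summing the two residues gives \eqref{eq:residue-relation}.

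As a sanity check, $F_3$ from \eqref{eq:level-three-form} has constant terms $1$ and $-1/3$, and indeed $1+3(-1/3)=0$. I expect this factor of $3$ to be the main obstacle: it depends on the width and differential normalization of the second chart. I would fix it by using exactly the normalization of Lemma~\ref{lem:prime-trace}, with the $F_3$ computation as a cross-check.

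\emph{Step 3: $\ell$ is a homomorphism.} By Lemmas~\ref{lem:prime-cusps} and~\ref{lem:completed-coefficients}, the numerator of $\cV_{3,0}^\partial$ is $\Q_S((q))_{\mathrm{bd}}\times\Q_S((t))_{\mathrm{bd}}$ in the basis $e_0$. On this numerator, $(u,v)\mapsto \operatorname{CT}(u)+3\operatorname{CT}(v)\in\Q$ is additive, so it remains to see that it sends both parts of the denominator into $S$:
\begin{itemize}[leftmargin=2em]
\item The integral lattice $S((q))\times S((t))$ has constant terms in $S$, so it maps into $S$.
\item By Lemma~\ref{lem:rational-source}, the rational source image consists of pairs $\tfrac12\bigl(\Exp_\infty(f),\Exp_0(f)\bigr)$ with $f\in M_2^!(\Gamma_0(3);\Q)$. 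By Step 2 these map to $0$.
\end{itemize}
Hence the map descends to a homomorphism $\cV_{3,0}^\partial\to\Q/S$.
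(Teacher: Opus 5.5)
Your proposal is correct and follows the paper's proof essentially step for step. In both, the invariant differential $g(\tau)\,d\tau$ is regular at the elliptic point and has poles only at the two cusps of $X_0(3)$; the substitution $\tau=-1/(3z)$ gives $g(\tau)\,d\tau=\tfrac{3}{2\pi i}\Exp_0(g)(t)\,dt/t$, so the residue theorem yields the relation; and $\ell$ descends because integral pairs have constant terms in $S$ while each source pair $\tfrac12(\Exp_\infty g,\Exp_0 g)$ is annihilated exactly (your $F_3$ check $1+3\cdot(-1/3)=0$ confirms the factor $3$, which the paper fixes by the chart convention of Lemma~\ref{lem:prime-trace}).
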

\begin{proof}
The differential $g(\tau)d\tau$ is invariant under $\Gamma_0(3)$.
It descends to a meromorphic differential on the compact modular
curve $X_0(3)$, with poles only at its two cusps. There are no
additional poles at elliptic points: in a local coordinate $s$
with effective stabilizer of order $e$, an invariant holomorphic
differential $a(s)ds$ has nonzero coefficients only in degrees
$e-1$ modulo $e$. It is therefore the pullback of a holomorphic
differential in the coarse coordinate $s^e$. The generic
involution acts trivially on a weight-two differential.

At infinity, $q=\exp(2\pi i\tau)$, so the residue is
$\operatorname{CT}\Exp_\infty(g)/(2\pi i)$.
At the other cusp, put $t=\exp(2\pi iz)$.
The differential convention in~\eqref{eq:tate-branches} gives
$\Exp_0(g)(t)=(g|S_0)(3z)$, just as in the first step of the
proof of Lemma~\ref{lem:prime-trace}. Setting $\tau=-1/(3z)$
therefore gives
\[
 g(\tau)d\tau=3\Exp_0(g)(t)\,dz
             =\frac3{2\pi i}\Exp_0(g)(t)\frac{dt}{t}.
\]
The residue theorem proves~\eqref{eq:residue-relation}, including
when $g$ has finite poles at the cusps. No holomorphic-only
$U_3$ identity is applied to $g$ here.

The numerator in~\eqref{eq:joint-value} has rational constant
coefficients. Its integral indeterminacy is
$S((q))\times S((t))$, which the displayed functional sends into
$S$. By Lemma~\ref{lem:rational-source}, every rational source
class is the pair
$\tfrac12(\Exp_\infty g,\Exp_0 g)$ for a single such $g$.
Equation~\eqref{eq:residue-relation} annihilates this pair exactly,
including its factor $1/2$. Hence the functional descends to the
actual bounded quotient.
\end{proof}

\begin{lemma}[The base value from the sphere unit]\label{lem:e-nonzero}
With the usual choice of sign for the stable class $\nu$,
\begin{equation}\label{eq:nu-base}
 b_3^\partial(\nu)
   =\pm\left[\left(\frac1{24},\frac1{24}\right)\right]
   =\pm\left[\left(\frac{E_2(q)}{24},
                         \frac{E_2(t^3)}{24}\right)\right].
\end{equation}
This value is nonzero and has exact order two.
\end{lemma}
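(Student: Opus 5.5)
We compute $b_3^\partial(\nu)$ by naturality from level one. By Corollary~\ref{cor:level-naturality}, $b_3^\partial(\iota_3\nu)=\rho_{3,0}^\partial(b_1(\nu))$, so the first step is to identify $b_1(\nu)$ in $\cV_{1,0}$. The sphere unit maps to $\TMF$ and then to $\KO((q))$ through the constant-series map $\KO\to\KO((q))$. Applying Proposition~\ref{prop:naturality} to the square $\mathbb S\to\TMF$, $\mathbb S_\Q\to\KO$, we reduce to the Adams $e$-invariant of $\nu$. As in the proof of Proposition~\ref{prop:product-formula}, we use the coefficient-cofiber description: a Bockstein lift $\widehat\nu\in\pi_4(\mathbb S\wedge M(\Q/\Z))$ maps to $\pm\frac1{24}\alpha$ in $\pi_4C_{\KO}=(\Q/\Z)\alpha$. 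This is the classical real $e$-invariant of $\nu$, and the factor $e_0=\alpha$ (rather than $u^2$) is what gives $1/24$ instead of $1/12$. Hence $b_1(\nu)=\pm[1/24]$ as a constant series. Transport through $j_3$ places it in both factors of Lemma~\ref{lem:prime-cusps}, giving the pair $(1/24,1/24)$, since constants restrict to constants under $q\mapsto q$ and $q\mapsto t^3$.

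The second equality in \eqref{eq:nu-base} is bookkeeping. Since $E_2(q)-1=-24\sum\sigma_1(r)q^r$, we get $E_2(q)/24-1/24\in\Z[[q]]$, and likewise $E_2(t^3)/24-1/24\in\Z[[t]]$. The difference therefore lies in the integral indeterminacy $S((q))\times S((t))$. Note that $E_2$ is not modular, so it is not being used as a source class; it serves only as an integral-series representative.

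For exact order two, apply the residue detector $\ell$ of Lemma~\ref{lem:residue-relation}. It sends the pair $(1/24,1/24)$ to $[1/24+3/24]=[1/6]\in\Q/\Z[1/3]$. In $\Q/S$, the element $1/6=\frac12\cdot\frac13$ equals $1/2$ modulo $S$, which is nonzero of order two. So $b_3^\partial(\nu)\neq0$. Conversely, $2\cdot(1/24,1/24)=(1/12,1/12)$, and we must show this is zero in $\cV_{3,0}^\partial$. Here we use the weight-two form $F_3=1+12\delta_3(q)$ of Lemma~\ref{lem:Mazur}. By \eqref{eq:half-normalization}, its rational source class is $\frac12(\Exp_\infty F_3,\Exp_0F_3)=(\tfrac12+6\delta_3(q),\,-\tfrac16-2\delta_3(t))$. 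Taking a suitable rational multiple $c\,F_3$ plus integral series, we try to match $(1/12,1/12)$ modulo $S$ in each constant term. Since $1/12\equiv 1/4$ modulo $\Z[1/3]$, the required constants are $(1/4,1/4)$. With $c=1/2$, the source class is $(1/4+3\delta_3,\,-1/12-\delta_3(t))$, and $-1/12\equiv -1/4\equiv 1/4+(-1/2)$. So the constants differ by $1/2$ in the second factor, which is inconsistent with $\ell$. The correct check is that $\ell(1/12,1/12)=[1/3]=0$ in $\Q/S$, and we then find an honest witness.

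Finding that witness is the main obstacle: we must show $\ell$ is injective on the span of constants, or directly exhibit $g\in M_2(\Gamma_0(3);\Q)$ with $\frac12\Exp(g)\equiv(1/12,1/12)$. The space of holomorphic forms is spanned by $F_3$, so we take $g=\lambda F_3$ and solve $\lambda/2\equiv1/12$ and $-\lambda/6\equiv1/12$ in $\Q/S$, with the nonconstant coefficients $6\lambda\delta_3$ and $2\lambda\delta_3$ integral. The choice $\lambda=1/6$ works, since $3$ is invertible. Its constants are $1/12$ and $-1/36\equiv 1/12-1/9$, and $1/9\in S$. The nonconstant coefficients $\delta_3$ and $\tfrac13\delta_3$ are integral over $S$. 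Hence $2b_3^\partial(\nu)=0$, and the value has exact order two.
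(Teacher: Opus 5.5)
Your proposal is correct. For the value itself it follows the paper; for the bound $2b_3^\partial(\nu)=0$ it takes a different route.

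\textbf{The value and nonvanishing.} The paper applies naturality directly to the unit square with $\mathbb S\to\KO$ on top and $E_3\to R_S(q)\times R_S(t)$ on the bottom. It also records that the level-one square gives $b_1(\nu)=\pm[1/24]$, so passing through $\rho_{3,0}^\partial$ is equivalent. The square you need is the unit square with $\TMF\to\KO((q))$ on the bottom; the square ``$\mathbb S\to\TMF$, $\mathbb S_\Q\to\KO$'' as you wrote it is garbled. Nonvanishing via $\ell\bigl(b_3^\partial(\nu)\bigr)=\pm[1/6]=\pm[1/2]\neq0$ in $\Q/\Z[1/3]$ is exactly the paper's argument.

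\textbf{The order-two bound.} The paper uses only additivity of $b_3^\partial$ together with $\pi_3\TMF_0(3)\cong\F_2\{\nu\}$ from Lemma~\ref{lem:MR-residue}. Since $2\nu=0$, it gets $2b_3^\partial(\nu)=b_3^\partial(2\nu)=0$. You instead kill $(\tfrac1{12},\tfrac1{12})$ directly in $\cV_{3,0}^\partial$.
\begin{itemize}
\item The rational source class of $\tfrac16F_3$ is $\bigl(\tfrac1{12}+\delta_3(q),\,-\tfrac1{36}-\tfrac13\delta_3(t)\bigr)$.
\item Its difference from $(\tfrac1{12},\tfrac1{12})$ is $\bigl(-\delta_3(q),\,\tfrac19+\tfrac13\delta_3(t)\bigr)$, which lies in $S((q))\times S((t))$.
\end{itemize}
This checks out.

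\textbf{What each route buys.} Your argument does not need the Mahowald--Rezk computation. It uses only that $\nu$ is torsion in the primary kernel, plus Lemmas~\ref{lem:prime-trace} and~\ref{lem:rational-source}. It also exhibits the bound inside the value group, using the same global form $F_3$ that drives joint annihilation. The paper's route is a one-line consequence of additivity and a source group it has already computed.

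\textbf{Corrections for the write-up.}
\begin{itemize}
\item Delete the abandoned attempt with $c=1/2$. Its congruences $1/12\equiv1/4$ and $-1/12\equiv-1/4$ modulo $\Z[1/3]$ are both false; in fact $1/12\equiv-1/4$, since $1/12+1/4=1/3$.
\item Drop the remark that one ``must show $\ell$ is injective on the span of constants.'' No injectivity statement is needed once the explicit witness is in hand, and vanishing of $\ell$ alone would not prove vanishing of the class.
\end{itemize}
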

\begin{proof}
For the sphere unit $\mathbb S\to\KO$, the relative invariant
in degree three is the real $e$-invariant with value
$\pm\alpha/24$ modulo $\Z\alpha$
\cite[Section~7, equation~(7.3), Theorem~7.16 and Example~7.17]{AdamsJIV}.
The normalization matters: Adams's $e'_R$ uses a bottom sphere
of dimension zero modulo eight. On the two-cell cone of a stable
representative of $\nu$, a real K-theory lift of the bottom
unit has rational top coefficient measured in the generator whose
complexification is twice the complex generator. To see the cofiber comparison explicitly, the extension of the
bottom unit over this two-cell cone induces a map from its top
sphere into the cofiber of $\mathbb S\to\KO$. The boundary is
$\nu$ up to the cofiber-rotation sign. Rationally the bottom cell
is killed, leaving precisely this top coefficient of $\alpha$;
changing the extension adds an integral multiple of $\alpha$.
Thus the cited value uses the real lattice of
\eqref{eq:complexification}, with no extra factor of two.

The constant-series inclusions give a homotopy-commutative square
\[
\begin{CD}
 \mathbb S @>>> \KO\\
 @VVV @VVV\\
 E_3 @>{\Phi_3^\partial}>> R_S(q)\times R_S(t).
\end{CD}
\]
Both composites from $\mathbb S$ are the unit. Naturality gives
the first equality in~\eqref{eq:nu-base} directly in the full
cusp target. The second follows from
$E_2=1-24\sum_{n\ge1}\sigma_1(n)q^n$, since the difference is
an integral pair. The same unit square at level one gives
$b_1(\nu)=\pm[1/24]=\pm[E_2/24]$, consistent with
\cite[Section~3.7, equation~(31)]{BN}; no connective-to-periodic
representative comparison is required.

The residue homomorphism now gives
\[
 \ell\bigl(b_3^\partial(\nu)\bigr)
      =\pm\left[\frac{1+3}{24}\right]
      =\pm[1/6]\ne0\quad\text{in }\Q/\Z[1/3].
\]
Since $1/6$ has a denominator divisible by two, it is not in
$\Z[1/3]$. The source class has order two by
Lemma~\ref{lem:MR-residue}, so its nonzero additive image has
exactly that order.
\end{proof}

\begin{lemma}[The periodic unit at both cusps]\label{ass:representatives}
Let $\delta\in\pi_{48}E_3$ be the Mahowald--Rezk unit with
modular form $\Delta^2$. Its cusp image is
\begin{equation}\label{eq:delta-cusps}
 (\Phi_3^\partial)_*(\delta)
   =\bigl(\Delta(q)^2\beta^6,\Delta(t^3)^2\beta^6\bigr).
\end{equation}
For every $m\in\Z$, multiplication by $\delta^m$ and by its
cusp image gives isomorphisms on the source and on the corresponding
relative value groups.
\end{lemma}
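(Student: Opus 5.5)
We compute the cusp image of $\delta$ chart by chart and then apply Lemma~\ref{lem:unit} in each direction.

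\emph{The cusp image.} By Lemma~\ref{lem:rational-source}, the rational image of $\delta$ is the modular form $\Delta^2$ of weight $12$. Its expansion at each cusp is read off with the invariant differential $d\log z$. The weight is $12=2w$ with $w=6$, so the relevant degree is $48=8\cdot 6$. In this degree the complexification~\eqref{eq:complexification} sends $\beta^6$ to $u^{24}$ with no factor of two. The real coordinate of a complex expansion $\Exp_c(f)u^{24}$ is therefore $\Exp_c(f)\beta^6$. This is unlike the $e_K$ case of~\eqref{eq:half-normalization}.

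For the standard chart $(\Tate(q),\mu_3)$ the expansion is $\Delta(q)^2$. On the second chart $(\Tate(t^3),\langle t\rangle)$, forgetting the subgroup returns the level-one Tate curve $\Tate(t^3)$ with the same differential, so the expansion is $\Delta(t^3)^2$. This matches the reasoning used for $h(t^p)$ in the proof of Theorem~\ref{thm:joint-kill}.

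Because $\pi_{48}\cK_3^\partial=S((q))\beta^6\times S((t))\beta^6$ is torsion-free by Lemma~\ref{lem:completed-coefficients}, the integral image is determined by its rationalization. This gives~\eqref{eq:delta-cusps}. Alternatively, we could compare with the level-one class $\Delta^{24}$ pulled back through $\iota_3$, but the rational argument suffices.

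\emph{The isomorphisms.} Since $\delta$ is a unit in $\pi_*E_3$, as Mahowald--Rezk state, multiplication by $\delta^m$ is an isomorphism on $\pi_*E_3$ with inverse $\delta^{-m}$. It preserves torsion and preserves the primary kernel, because $\Phi_3^\partial$ is a ring map and $(\Phi_3^\partial)_*(\delta^m y)=(\Phi_3^\partial)_*(\delta)^m(\Phi_3^\partial)_*(y)$.

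The cusp image is a unit as well. Indeed, $\Delta(q)^2\in q^2(1+q\Z[[q]])$ is invertible in $S((q))$, the same holds for $\Delta(t^3)^2$ in $S((t))$, and $\beta$ is invertible. Multiplication by this unit carries the integral lattice $\im\pi_{8K+4}\cK_3^\partial$ onto the lattice in degree $8K+52$. It also carries the rational source image $(\Phi_3^\partial)_*(\pi_{8K+4}E_3\otimes\Q)$ onto the corresponding image, since the source image is a $\pi_*E_3$-submodule and $\delta^{\pm m}$ acts on it. So multiplication by the image of $\delta^{m}$ induces an isomorphism $\cV_{3,K}^\partial\to\cV_{3,K+6m}^\partial$. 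Lemma~\ref{lem:unit}, with $d=48m$ even, shows this isomorphism intertwines $b_3^\partial$ with no sign.

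\emph{Main obstacle.} The delicate step is the normalization in degree $48$: confirming that no factor $\tfrac12$ appears in the real coordinate, and that the second-cusp expansion involves no isogeny or Adams-operation factor. I would handle the normalization through~\eqref{eq:complexification}, using that $c(\beta)=u^4$ exactly. I would handle the second-cusp expansion through the chart description~\eqref{eq:tate-branches}.
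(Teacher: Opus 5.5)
Your proposal is correct and follows the paper's proof essentially step for step: you read off $\Delta(q)^2$ and $\Delta(t^3)^2$ on the two Tate branches, use $c(\beta^6)=u^{24}$ (no factor of two) and the torsion-freeness of $\pi_{48}\cK_3^\partial$ to fix the integral image, and use that $\delta$, $\beta$, $\Delta(q)^2$, $\Delta(t^3)^2$ are units to obtain the isomorphisms of the source and value groups. One bookkeeping slip needs correcting: $\Delta^2$ has weight $24$, so $48=2w$ with $w=24=4\cdot 6$, not weight $12$ with $w=6$; this is exactly why the complex coordinate is $u^{24}$ and the real one is $\beta^6$.
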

\begin{proof}
Mahowald--Rezk fix $\Delta$ as the discriminant for the invariant
elliptic differential and identify $\Delta^2$ as the degree-$48$
periodicity generator \cite[Proposition~3.2, Proposition~4.1 and Proposition~5.2]{MR}.
Thus its filtration-zero modular form is $\Delta^2$, not merely
a nonzero multiple of it. Tate restriction has the stated expansions
on the same underlying curves, with $q=t^3$ on the second branch.
In degree $48$, complexification sends $\beta^6$ to $u^{24}$ and
is injective on each torsion-free real Laurent module. It therefore
identifies the actual integral cusp image as~\eqref{eq:delta-cusps}.

Both $\Delta(q)=q\prod_{n\ge1}(1-q^n)^{24}$ and
$\Delta(t^3)=t^3\prod_{n\ge1}(1-t^{3n})^{24}$ are integral
Laurent units. All their integer powers have integral coefficients
and finite lower Laurent bounds. On the spectral source, $\delta$
is already a unit. Its action takes integral target classes to
integral classes and rational source images to rational source
images; $\delta^{-m}$ gives the inverse action. Hence it induces
the asserted isomorphisms on the relative quotients in every period.
\end{proof}

\begin{theorem}[Exact value on the level-three generator]\label{thm:nu-value}
For every $m\in\Z$,
\[
 b_3^\partial(\nu\delta^m)
 =\rho_{3,6m}^\partial\!\left(
 \left[\frac{E_2\Delta^{2m}}{24}\right]\right)
\]
is nonzero of exact order two.
\end{theorem}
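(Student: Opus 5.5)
The plan is to reduce the periodic family to the base case $m=0$ by means of the unit $\delta$, and then to identify the displayed level-one representative with the transported value. To begin, Corollary~\ref{cor:primary-kernel} shows that $\nu\delta^m$ is a torsion class in stem $48m+3$ lying in $\ker(\Phi_3^\partial)_*$, so $b_3^\partial(\nu\delta^m)$ is defined. Since $\delta$ is a unit of even degree $48$ in the ring spectrum $E_3$, Lemma~\ref{lem:unit} applies without sign and gives
\[
 b_3^\partial(\nu\delta^m)=(\Phi_3^\partial)_*(\delta^m)\,b_3^\partial(\nu).
\]
Here $\nu\delta^m=\delta^m\nu$ because both degrees are even or the sign is immaterial on a class of order two. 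Lemma~\ref{lem:e-nonzero} gives $b_3^\partial(\nu)=\pm[(E_2(q)/24,E_2(t^3)/24)]$, which is of exact order two. Lemma~\ref{ass:representatives} gives the cusp image of $\delta^m$ as $(\Delta(q)^{2m}\beta^{6m},\Delta(t^3)^{2m}\beta^{6m})$, and it also shows that multiplication by this image is an isomorphism of the relative value groups. The product is therefore the class of the pair $\pm\bigl(E_2\Delta^{2m}/24\bigr)(q),\,\pm\bigl(E_2\Delta^{2m}/24\bigr)(t^3)$ in the coordinate $e_{6m}=\alpha\beta^{6m}$. It is nonzero of exact order two, because isomorphisms preserve order. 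The sign $\pm$ can be dropped, since the class has order two.

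Next I would identify this pair with $\rho_{3,6m}^\partial([E_2\Delta^{2m}/24])$. The series $E_2\Delta^{2m}/24$ lies in $\Z((q))\otimes\Q$, so it defines an element of $\cV_{1,6m}$, and hence of its localization at $3$. By construction, $\rho^\partial_{3,6m}$ is induced by the forward comparison~\eqref{eq:old-new-comparison} followed by the level-forgetting square. On the two factors of Lemma~\ref{lem:prime-cusps}, the inherited level-one expansions map by $q\mapsto q$ and $q\mapsto t^3$. The same substitution explains why the level-one class $h$ appeared as $h(t^p)$ in Theorem~\ref{thm:joint-kill}. The image of the level-one representative is therefore exactly the pair above.

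As a consistency check, I would also note that Corollary~\ref{cor:level-naturality} applied to $\nu\Delta^{24}$-type level-one classes is not needed, because the argument above already works directly at level three.

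The main point to verify is the compatibility of the real coordinate under the unit multiplication. Multiplication by $\beta^{6m}$ has to carry $e_0=\alpha$ to $e_{6m}$ within the same integral lattice, and the quotient by $\delta^m$ has to match the quotient used for the transported level-one class. Once both of these hold, the displayed equality is an identity of representatives rather than merely of classes. I would check the first using~\eqref{eq:real-lattice}. For the second, I would use the fact that the rational source images correspond under the isomorphism of Lemma~\ref{ass:representatives}.
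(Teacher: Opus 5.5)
Your proposal is correct and follows the paper's proof. It uses Lemma~\ref{lem:unit} with the even-degree unit $\delta^m$, the value-group isomorphism of Lemma~\ref{ass:representatives}, and the base value of Lemma~\ref{lem:e-nonzero}, and it recognizes the image of $\rho_{3,6m}^\partial$ through the substitutions $q\mapsto q$, $q\mapsto t^3$. There are two cosmetic differences. The paper multiplies the constant representative $(1/24,1/24)$ and then replaces $1$ by $E_2$ using $24\mid E_2-1$, while you multiply the $E_2$-representative directly; both are valid because the unit action is well defined on classes. Also, your remark that ``both degrees are even'' is inaccurate since $|\nu|=3$, but the relevant sign $(-1)^{3\cdot 48m}$ is $+1$ anyway.
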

\begin{proof}
The unit $\delta^m$ has even degree. Lemmas~\ref{lem:unit}
and~\ref{ass:representatives} therefore give
\[
 b_3^\partial(\nu\delta^m)
   =(\Phi_3^\partial)_*(\delta^m)b_3^\partial(\nu).
\]
This action is an isomorphism of value groups, so
Lemma~\ref{lem:e-nonzero} gives nonvanishing and exact order two
for every positive or negative $m$. In the coordinate
$e_{6m}=\alpha\beta^{6m}$, a representative is
\[
 \frac1{24}\bigl(\Delta(q)^{2m},\Delta(t^3)^{2m}\bigr).
\]
Replacing the two factors $1$ by $E_2(q)$ and $E_2(t^3)$ changes
this pair by an integral Laurent pair, since $E_2-1$ is divisible
by $24$. The resulting pair is precisely the image under
$\rho_{3,6m}^\partial$ of the stated bounded level-one representative.
Its possible overall sign is immaterial because the class has
order two. No level-one $\Delta^2$ spectral unit is used.
\end{proof}

\begin{corollary}[Complete table]\label{cor:table}
All kernels in the table are restricted to the displayed torsion domain.
Let $r=K\bmod6$ and $n_r=8r+3$. In one $48$-stem period, the complete
torsion domain, primary kernel, secondary kernel, and image are
\[
\begin{array}{c|c|c|c|c|c}
 r&n_r&\Tor\pi_{n_r}\TMF_0(3)&\ker\Phi_3^\partial&\ker b_3^\partial&\im b_3^\partial\\
\hline
0&3&\Z/2\{\nu\}&\Z/2&0&\Z/2\\
1&11&0&0&0&0\\
2&19&0&0&0&0\\
3&27&0&0&0&0\\
4&35&0&0&0&0\\
5&43&0&0&0&0
\end{array}
\]
Multiplication by $\delta$ transports the table to every degree.
\end{corollary}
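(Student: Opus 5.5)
The table should follow by assembling the level-three results proved above, one column at a time, after reducing to a single period. We fix $r=K\bmod 6$ and write $K=6m+r$. Multiplication by $\delta^m$ is an isomorphism on the source torsion groups, since $\delta$ is a unit of $\pi_*E_3$. By Lemma~\ref{ass:representatives} the same multiplication gives isomorphisms of the relative value groups $\cV_{3,r}^\partial\to\cV_{3,K}^\partial$. Lemma~\ref{lem:unit} with $d=48$ even shows that $b_3^\partial$ intertwines these isomorphisms without sign. Primary kernels are preserved because $(\Phi_3^\partial)_*$ is a ring map and $(\Phi_3^\partial)_*(\delta)$ is a unit. Thus every column transported by $\delta^m$ agrees with its value at $m=0$, which justifies the final sentence of the statement and reduces the proof to the rows $n_r=3,11,\dots,43$.

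For the torsion column we invoke Lemma~\ref{lem:MR-residue}. It gives $\Z/2\{\nu\}$ in stem $3$ and zero torsion in stems $11,19,27,35,43$. The primary-kernel column then follows from Corollary~\ref{cor:primary-kernel}, since $\pi_{8K+3}\cK_3^\partial=0$ forces every torsion class into the kernel. For $r\neq0$ the domain is zero, so the secondary kernel and image are zero as well.

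The only substantive row is $r=0$. Here the domain of $b_3^\partial$ is $\Z/2\{\nu\}$, and Lemma~\ref{lem:e-nonzero} shows $b_3^\partial(\nu)\neq0$ of exact order two. It is detected by the residue homomorphism $\ell$ of Lemma~\ref{lem:residue-relation}, which takes the value $\pm[1/6]\neq0$ in $\Q/\Z[1/3]$. Since $b_3^\partial$ is additive by Proposition~\ref{prop:relative}, its kernel on $\Z/2$ is zero and its image is the subgroup of order two generated by $b_3^\partial(\nu)$. This image is $\Z/2$. Transporting by $\delta^m$, Theorem~\ref{thm:nu-value} gives the same conclusion in stem $48m+3$.

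I expect no step to be a serious obstacle, since the content lies in the earlier lemmas. The point that needs care is the interpretation of the columns. Each kernel is to be read as restricted to the torsion domain, and $\ker b_3^\partial$ as a subgroup of the primary kernel; the statement makes this explicit. We must also check that $\delta$-transport is compatible with the definition of $b_3^\partial$ on the primary kernel. This holds because $\delta$ maps the primary kernel to itself, so Lemma~\ref{lem:unit} applies on the whole domain.
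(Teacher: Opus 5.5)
Your proposal is correct and follows the paper's route. The table is assembled from Lemma~\ref{lem:MR-residue} for the torsion column and Corollary~\ref{cor:primary-kernel} for the primary kernel, using $\pi_{8K+3}\cK_3^\partial=0$. The secondary columns come from Lemma~\ref{lem:e-nonzero} together with the $\delta$-transport of Lemmas~\ref{lem:unit} and~\ref{ass:representatives}, which is packaged in Theorem~\ref{thm:nu-value}. Your explicit reduction to one period is slightly redundant, since Lemma~\ref{lem:MR-residue} and Theorem~\ref{thm:nu-value} already hold for every $m$, but it does no harm.
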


\section{Conclusing remarks}

The joint-annihilation theorem constructs one rational global
correction and an actual integral global remainder. Applied to the
product formula of Section~\ref{sec:products}, it proves vanishing
at every nontrivial level for the Bunke--Naumann secondary invariants
of the stated products. The product formula is credited to Tachikawa;
the argument here supplies its proof in the ordered, bounded target.

At level three, the residue detector and the real sphere-unit
calculation give exact order two for the periodic $\nu$ family.
The underlying degree calculation is already determined by
\cite[Proposition~4.1]{MR}. The intrinsic class $x$ and its powers
contribute no additional classes in stems $8K+3$; the relation
$x^3=\nu\delta$ returns to the periodic $\nu$ family. The
vanishing of a secondary value does not imply the vanishing of its
homotopy class, and the present construction supplies neither a
geometric index theorem nor a field-theoretic level-$N$ realization.

Joint annihilation uses only the forward comparison into the
specified completed target. No identification is made with a
quotient obtained by interchanging localization, completion,
and rationalization.

\section*{Acknowledgments}
We thank Yuji Tachikawa for helpful correspondence on the surrounding $\TMF$ program and generously providing his lecture notes. The initial exploration, source synthesis, and drafting of this project were substantially assisted by OpenAI's GPT-5.6 Sol and GPT-6 Astra under the authors' direction.\footnote{The authors initially used Anthropic's Claude Fable 5 to generate an 88-page set of notes for this multi-paper project; these notes were later corrected and revised by ChatGPT.} The authors have checked the arguments and take responsibility for the mathematical claims.

\appendix
\section{Completion and source conventions}\label{app:interfaces}

The integral global class $z_N$ and rational global source class $x_N$
satisfy~\eqref{eq:global-witness-equality}, transported
by~\eqref{eq:prime-to-composite}. The completion convention and
bounded numerator are fixed
in~\eqref{eq:ordered-completion}--\eqref{eq:bounded-numerator}.
The map from the inherited level-one target always uses the forward
comparison~\eqref{eq:old-new-comparison}.

The degree-four real coordinate has
$c(\alpha)=2u^2$, while the degree-$48$ periodic coordinate has
$c(\beta^6)=u^{24}$. These are different normalization steps:
the first fixes both the rational modular-form image and the real
secondary coefficient; the second determines the integral cusp
image of $\delta$. In particular, multiplication by a modular
form in a Laurent representative does not assert the existence of
a like-named unit in level-one $\TMF$.

For products, $L_\Q=L\otimes\Q$ is the numerator before passing
to the quotient. The proof produces a representative with common
denominator two and an integral modular lift $h=HH'$.
It does not require an inverse to a completion comparison or the
stronger classification of each nonzero value by one power of
$\Delta$.

\section{Degree bookkeeping for Lemma~\ref{lem:MR-residue}}

The generators of the left module in~\eqref{eq:MR} have degrees
\[
\begin{array}{c|ccccccc}
\text{generator}&\nu&\nu^2&x&\eta x&\bar\kappa&x^2&\nu x^2\\
\hline
\text{degree}&3&6&17&18&20&34&37\\
\text{degree mod }8&3&6&1&2&4&2&5.
\end{array}
\]
The $bo$ coefficient residues are $0,1,2,4$. For $bsp$, the
coefficient residues are $0,4,5,6$, and each of the two $bsp$
generators in~\eqref{eq:MR} shifts degree by four modulo eight.
The rightmost module is supported in degree $24$ modulo $48$.
Thus every other term of the exact sequence vanishes in residue
three, leaving no group extension to be determined.

\begin{remark}[A remaining transfer comparison]\label{prop:transfer-square}
The smooth level-forgetting map
$f:\mathcal M_0(3)\to\mathcal M_{\mathrm{ell}}[1/3]$
has the published $\TMF[1/3]$-module transfer
$f_!:E_3\to\TMF[1/3]$, with
\begin{equation}\label{eq:transfer-four}
 f_!f^*=4\,\mathrm{id}
\end{equation}
\cite[Proposition~3.5]{MR}. A separate question is whether a trace
on the ordered completed full-cusp targets can be identified with
this specified transfer in a homotopy-commutative square
\[
\begin{CD}
 E_3 @>{\Phi_3^\partial}>>\cK_3^\partial\\
 @V{f_!}VV @VV{\tr_\partial}V\\
 \TMF[1/3] @>{\Phi_{1;3}}>>R_{\Z[1/3]}(q),
\end{CD}
\]
where $\Phi_{1;3}$ uses the forward completion comparison.
Such a square would imply
\begin{equation}\label{eq:transfer-secondary}
 \tr_\partial b_3^\partial(y)=b_{1;3}(f_!y)
\end{equation}
by naturality, where $b_{1;3}=b_{\Phi_{1;3}}$.
This comparison is not proved here and is not used by any theorem
or application above. The residue calculation detects the
secondary value without identifying a spectral cusp transfer.
\end{remark}

\bibliographystyle{amsalpha}
\bibliography{references}

\end{document}